\definecolor{darkgreen}{rgb}{0,0.7,0}
\definecolor{darkred}{rgb}{0.7,0,0}
\definecolor{darkblue}{rgb}{0,0,0.7}
\newtheorem{theorem}{Theorem}[section]
\newtheorem{lemma}[theorem]{Lemma}
\newtheorem{proposition}[theorem]{Proposition}
\newtheorem{problem}[theorem]{Problem}
\newtheorem{remark}[theorem]{Remark}
\newtheorem{example}[theorem]{Example}
\newcommand{\ZZ}{\mathbb{Z}}
\newcommand{\QQ}{\mathbb{Q}}
\newcommand{\KK}{\mathbb{K}}
\newcommand{\kk}{\mathbb{K}}
\newcommand{\GL}{\operatorname{GL}}
\newcommand{\lex}{\operatorname{lex}}
\newcommand{\x}{\mathbf{x}}
\newcommand{\R}{\kk[\mathbf{x}]}
\newcommand{\RG}{\R^G}
\newcommand{\Sym}{\operatorname{Sym}(\mathbf{x})}
\newcommand{\sg}[1][n]{{\mathfrak{S}_{#1}}}
\newcommand{\E}{\mathcal{E}}    
\newcommand{\EG}{{\mathcal{E}^G}} 
\newcommand{\OO}{\mathcal{O}}
\newcommand{\q}{\epsilon}
\newcommand{\orbitsum}[1]{o(#1)}
\newcommand{\I}{\mathcal{I}}  
\newcommand{\IG}{\mathcal{I}^G} 
\newcommand{\Hilb}[1][n]{H({#1}, z)} 
\newcommand{\HilbSec}[1][n]{S({#1}, z)} 
\newcommand{\SecInv}[1][]{S_{#1}} 
\newcommand{\ind}{\hspace{4ex}}
\newcommand{\comm}[1][]{\qquad \text{\footnotesize{\##1}}}
\newcommand{\Sage}{\texttt{Sage}\xspace}
\newcommand{\sagecombinat}{\texttt{Sage-Combinat}\xspace}
\newcommand{\gap}{\texttt{GAP}\xspace}
\newcommand{\magma}{\texttt{Magma}\xspace}
\newcommand{\mupadcombinat}{\texttt{MuPAD-Combinat}\xspace}
\newcommand{\python}{\texttt{Python}\xspace}
\newcommand{\cython}{\texttt{Cython}\xspace}
\newcommand{\singular}{\texttt{Singular}\xspace}
\newskip\@bigflushglue \@bigflushglue = -100pt plus 1fil
\def\bigcentering{\let\\\@centercr\rightskip\@bigflushglue%
\leftskip\@bigflushglue
\parindent\z@\parfillskip\z@skip}
\newcommand{\TODO}[2][To do: ]{\textcolor{red}{\textbf{#1#2}}}
\newcommand{\INFO}[2][Info: ]{\textcolor{red}{\textbf{#1#2}}}
\newcommand{\TODO}[2][]{}
\newcommand{\INFO}[2][]{}
\title[An evaluation approach to computing 
  invariants rings]{An evaluation approach to computing\\
  invariants rings of permutation groups}
\author{Nicolas Borie and Nicolas M.~Thi\'ery}
\address{Univ. Paris-Sud, Laboratoire de Math\'ematiques d'Orsay,
         Orsay Cedex, F-91405; CNRS, France}
\begin{document}
\maketitle

\begin{abstract}
  Using evaluation at appropriately chosen points, we propose a
  Gr\"obner basis free approach for calculating the secondary
  invariants of a finite permutation group. This approach allows for
  exploiting the symmetries to confine the calculations into a smaller
  quotient space, which gives a tighter control on the algorithmic
  complexity, especially for large groups. This is confirmed by
  extensive benchmarks using a \Sage implementation.
\end{abstract}


\section{Introduction}


Invariant theory has been a rich and central area of algebra ever
since the eighteenth theory, with practical
applications~\cite[\S~5]{Kemper_Derksen.CIT.2002} in the resolution of
polynomial systems with symmetries (see
e.g. \cite{Colin.1997.SolvingSymmetries},
\cite{Gatermann.1990.Symmetry}, \cite[\S~2.6]{Sturmfels.AIT},
\cite{Faugere_Rahmany.2009.SAGBIGroebner}), in effective Galois theory
(see e.g.~\cite{Colin.TIE}, \cite{Abdeljaouad.TIATG},
\cite{Geissler_Kluners.2000.GaloisGroupComputations}), or in discrete
mathematics (see e.g.~\cite{Thiery.AIG.2000,Pouzet_Thiery.IAGR.2001}
for the original motivation of the second author). The literature
contains deep and explicit results for special classes of groups, like
complex reflection groups or the classical reductive groups, as well
as general results applicable to any group. Given the level of
generality, one cannot hope for such results to be simultaneously
explicit and tight in general. Thus the subject was effective early
on: given a group, one wants to \emph{calculate} the properties of its
invariant ring. Under the impulsion of modern computer algebra,
computational methods, and their implementations, have largely
expanded in the last twenty
years~\cite{Kemper.Invar,Sturmfels.AIT,Thiery.CMGS.2001,Kemper_Derksen.CIT.2002,King.2007.secondary,King.2007.minimal}.
However much progress is still needed to go beyond toy examples and
enlarge the spectrum of applications.

An important obstruction is that the algorithms depend largely on
efficient computations in certain quotients of the invariant ring;
this is usually carried out using elimination techniques (Gröbner or
SAGBI-Gröbner bases), but those do not behave well with respect to
symmetries. An emerging trend is the alternative use of evaluation
techniques, for example to rewrite invariants in terms of an existing
generating set of the invariant
ring~\cite{Gaudry_Schost_Thiery.2006,Dahan_Schost_Wu.2009}.
\TODO{Contacter Romain Lebreton pour savoir où il en est!}

\textbf{In this paper, and as a test bed, we focus on the problem of
  computing secondary invariants of finite permutation groups in the
  non modular case, using evaluation techniques.}

In Section~\ref{section.preliminaries}, we review some relevant
aspects of computational invariant theory, and in particular discuss
the current limitations due to quotient computations. In
Section~\ref{section.quotient}, we give a new theoretical
characterization of secondary invariants in term of their evaluations
on as many appropriately chosen points; this is achieved by
perturbating slightly the quotient, and using the grading to transfer
back results.
In Section~\ref{section.algorithm}, we derive an algorithm for
computing secondary invariants of permutation groups. We establish in
Section~\ref{section.complexity} a worst case complexity bound for
this algorithm. This bound suggests that, for a large enough group
$G$, at least a factor of $|G|$ is gained. This comparison remains
however sloppy since, to the best of our knowledge and due to the
usual lack of fine control on the complexity of Gröbner bases methods,
no meaningful bound exists in the literature for the elimination based
algorithms. Therefore, in Section~\ref{section.benchmarks} we
complement this theoretical analysis with extensive benchmarks
comparing in particular our implementation in \Sage and the
elimination-based implementation in
\singular's~\cite{Singular,King.2007.secondary}. Those benchmarks
suggest a practical complexity which, for large enough groups, is
cubic in the size $n!/|G|$ of the output. And indeed, if the
evaluation-based implementation can be order of magnitudes slower for
some small groups, it treats predictably large groups which are
completely out of reach for the elimination-based implementation. This
includes\TODO{all transitive permutation groups for which $n!/|G|\leq
  1000$ and} an example with $n=14$, $|G|=50,803,200$, and $1716$
secondary invariants.

We conclude, in Section~\ref{section.future}, with a discussion of
avenues for further improvements.

\section{Preliminaries}
\label{section.preliminaries}

We refer to~\cite{Stanley.1979,Sturmfels.AIT,Cox_al.IVA,Smith.1997,Kemper.1998,Kemper_Derksen.CIT.2002}
for classical literature on invariant theory of finite groups. Parts
of what follows are strongly inspired by~\cite{Kemper.1998}.
\TODO{Add ref in text... if possible}
Let $V$ be a $\kk$-vector space of finite dimension $n$, and $G$ be a
finite subgroup of $\GL(V)$. Tacitly, we interpret $G$ as a group of
$n\times n$ matrices or as a representation on $V$. Two vectors $v$
and $w$ are \emph{isomorphic}, or in the same \emph{$G$-orbit} (for
short \emph{orbit}), if $\sigma\cdot v=w$ for some $\sigma\in G$.

Let $\x:=(x_1,\dots,x_n)$ be a basis of the dual of $V$, and let $\R$
be the ring of polynomials over $V$. The action of $G$ on $V$ extends
naturally to an action of $G$ on $\R$ by $\sigma\cdot p:=p\circ
\sigma^{-1}$. An \emph{invariant polynomial}, or \emph{invariant}, is
a polynomial $p\in K[x_1,\dots,x_n]$ such that $\sigma\cdot p=p$ for
all $\sigma\in G$. The \emph{invariant ring} $\RG$ is the set of
all invariants.  Since the action of $G$ preserves the
degree of polynomials, it is a graded connected commutative algebra:
$\RG=\bigoplus_{d\geq 0} \RG_d$, with $\RG_0\approx \kk$. We write
$\RG_+=\bigoplus_{d> 0} \RG_d$ for the positive part of the
invariant ring. The \emph{Hilbert series} of $\RG$ is the
generating series of its dimensions:
\begin{displaymath}
 \Hilb[\RG] := \sum_{d=0}^\infty z^d \dim \RG_d\,.
\end{displaymath}
It can be calculated using Molien's formula:
\begin{displaymath}
  \Hilb[\RG] = \frac{1}{|G|} \sum_{M \in G} \frac{1}{\det(\operatorname{Id} - zM)}\,.
\end{displaymath}
This formula reduces to Pólya enumeration for permutation
groups. Furthermore, the summation can be taken instead
over conjugacy classes of $G$, which is relatively cheap in practice.

A crucial device is the Reynolds operator:
\begin{displaymath}
  \begin{array}{clll}
    R: & \R &\longrightarrow &\RG\\
       & p  &\longmapsto & \frac{1}{|G|} \sum_{g\in G} g.p\,,
  \end{array}
\end{displaymath}
which is both a graded projection onto $\RG$ and a morphism of
$\RG$-module. Note that its definition requires $\operatorname{char}
\kk$ not to divide $|G|$, which we assume from now on (non-modular
case).

Hilbert's fundamental theorem of invariant theory states that $\RG$
is finitely generated: there exists a finite set $S$ of invariants
such that any invariant can be expressed as a polynomial combination
of invariants in $S$. We call $S$ a \emph{generating set}. If no
proper subset of $S$ is generating, $S$ is a \emph{minimal generating
  set}. Since $\RG$ is finitely generated, there exists a degree
bound $d$ such that $\RG$ is generated by the set of all
invariants of degree at most $d$. We denote by $\beta(\RG)$ the
\emph{smallest degree bound}. Noether proved that $\beta(\RG)\leq
|G|$.

Thanks to the grading, for $M$ a set of homogeneous invariants, the
following properties are equivalent:
\begin{enumerate}[(i)]
\item $M$ is a minimal generating set for $\RG$;
\item $M$ is a basis of the quotient $\RG / {\RG_+}^2$.
\end{enumerate}
Therefore, even though the generators in $M$ are non canonical, the
number of generators of a given degree $d$ in $M$ is: it is given by
the dimension of the component of that degree in the graded quotient
$\RG / {\RG_+}^2$. There is no known algorithm to compute those
dimensions, or even just $\beta(\RG)$, without computing explicitly a
minimal generating set.

The previous properties give immediately a naive algorithm for
computing an homogeneous minimal generating set, calculating degree by
degree in the finite dimensional quotient up to Noether's bound. There
are however two practical issues. The first one is that Noether's
bound is tight only for cyclic groups; in general it is very dull,
possibly by orders of magnitude. The second issue is how to compute
efficiently in the given quotient. We will get back to it.

By a celebrated result of Shepard, Todd, Chevalley, and Serre, $\RG$
is a polynomial algebra if and only if $G$ is a complex reflection
group. In all other cases, there are non trivial relations (also
called syzygies) between the generators; however $\RG$ remains
\emph{Cohen-Macaulay}. Namely, a set of $m$ homogeneous invariants
$(\theta_1,\dots,\theta_n)$ of $\RG$ is called a \emph{homogeneous
  system of parameters} or, for short, a \emph{system of parameters}
if the invariant ring $\RG$ is finitely generated over its subring
$\kk[\theta_1,\dots,\theta_n]$. That is, if there exist a finite
number of invariants $(\eta_1,\dots,\eta_t)$ such that the invariant
ring is the sum of the subspaces
$\eta_i.\kk[\theta_1,\dots,\theta_n]$.  By Noether's normalization
lemma, there always exists a system of parameters for $\RG$. Moreover,
$\RG$ is \emph{Cohen-Macaulay}, which means that $\RG$ is a
free-module over any system of parameters. Hence, if the set
$(\eta_1,\dots,\eta_t)$ is minimal for inclusion, $\RG$ decomposes
into a direct sum:
\begin{displaymath}
  \RG=\bigoplus_{i=1}^t \eta_i . \kk[\theta_1,\dots,\theta_n].
\end{displaymath}
This decomposition is called a \emph{Hironaka decomposition} of the
invariant ring. The $\theta_i$ are called \emph{primary invariants},
and the $\eta_i$ \emph{secondary invariants} (in algebraic
combinatorics literature, the $\theta_i$ are some times called
\emph{quasi-generators} and the $\eta_i$
\emph{separators}~\cite{Garsia_Stanton.1984}). It should be emphasized
that primary and secondary invariants are not uniquely determined, and
that being a primary or secondary invariant is not an intrinsic
property of an invariant $p$, but rather express the role of $p$ in a
particular generating set.

The primary and secondary invariants together form a generating set,
usually non minimal. From the degrees $(d_1,\dots,d_n)$ of the
primary invariants $(\theta_1,\dots,\theta_n)$ and the Hilbert series
we can compute the number $t$ and the degrees $(d'_1,\dots,d'_t)$ of the
secondary invariants $(\eta_1,\dots,\eta_t)$ by the formula:
\begin{displaymath}
  \label{eq:degres_secondaires}
  z^{d'_1}+\dots+z^{d'_t}=(1-z^{d_1})\cdots(1-z^{d_n}) H(\RG,z)\,.
\end{displaymath}
We denote this polynomial by $\HilbSec[\RG]$.
%
Assuming $d_1\le\dots\le d_n$ and $d'_1\le\dots\le d'_t$, it can be
proved that:
\begin{displaymath}
  \label{eq.max_secondaires}
  \begin{gathered}
    t   = \frac{d_1\cdots d_n}{|G|}\,,\qquad
    d'_t = d_1+\dots+d_n - n - \mu\,,\qquad
    \beta(\RG)\le \max(d_n,d'_t)\,,
  \end{gathered}
\end{displaymath}
where $\mu$ is the smallest degree of a polynomial $p$ such that
$\sigma\cdot p=\det(\sigma) p$ for all $\sigma\in G$ \cite[Proposition
3.8]{Stanley.1979}.

\TODO{bad hyphenation for symmetric}

For example, if $G$ is the symmetric group $\sg$, the $n$ elementary
symmetric polynomials (or the $n$ first symmetric power sums) form a
system of parameters, $t=1$, $d'_t=0$ and $\eta_1=1$. This is
consistent with the fundamental theorem of symmetric polynomials.
More generally, if $G$ is a permutation group, the elementary
symmetric polynomials still form a system of parameters: $\RG$ is a
free module over the algebra $\Sym=\R^{\sg}$ of symmetric polynomials. It
follows that:
\begin{gather*}
  t   = \frac{n!}{|G|}\,,\qquad
  d'_t = \binom n 2 - \mu\,,\qquad
  \beta(\RG)\le \binom{n}{2}\,.
\end{gather*}

For a review of algorithms to compute primary invariants with minimal
degrees, see~\cite{Kemper_Derksen.CIT.2002}. They use Gröbner bases,
exploiting the property that a set $\Theta_1,\dots,\Theta_n$ of $n$
homogeneous invariants forms a system of parameters if and only if
$\x=0$ is the single solution of the system of equations
$\Theta_1(\x)=\cdots=\Theta_n(\x)=0$ (see
e.g.~\cite[Proposition~3.3.1]{Kemper_Derksen.CIT.2002}).

\TODO{Recently the use of evaluation techniques has been
  explored
}

We focus here on the second step: we assume that primary invariants
$\Theta_1,\dots,\Theta_n$ are given as input, and want to compute
secondary invariants. This is usually achieved by using the following
proposition to reduce the problem to linear algebra.
\begin{proposition}
  \label{proposition.secondary}
  Let $\Theta_1,\dots,\Theta_n$ be primary invariants and
  $S:=(\eta_1,\dots,\eta_t)$ be a family of homogeneous invariants with
  the appropriate degrees. Then, the following are equivalent:
  \begin{enumerate}[(i)]
  \item $S$ is a family of secondary invariants;
  \item $S$ is a basis of the quotient $\RG / \langle
    \Theta_1,\dots,\Theta_n\rangle_{\RG}$;
  \item $S$ is free in the quotient $\R / \langle
    \Theta_1,\dots,\Theta_n\rangle_{\R}$.
  \end{enumerate}
\end{proposition}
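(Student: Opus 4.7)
The plan is to treat (i)$\Leftrightarrow$(ii) via the graded Nakayama lemma, and (ii)$\Leftrightarrow$(iii) by identifying $\RG/\langle\Theta_1,\dots,\Theta_n\rangle_{\RG}$ with $(\R/\langle\Theta_1,\dots,\Theta_n\rangle_\R)^G$ through the Reynolds operator and then matching graded dimensions.

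For (i)$\Leftrightarrow$(ii), since $\RG$ is Cohen--Macaulay it is a free graded module over $A:=\kk[\Theta_1,\dots,\Theta_n]$, so $H(\RG,z)=H(\RG/A_+\RG,z)\cdot\prod_i(1-z^{d_i})^{-1}$, which rearranges to $H(\RG/A_+\RG,z)=S(\RG,z)=\sum_j z^{d'_j}$. In particular, the dimension of $\RG/\langle\Theta_1,\dots,\Theta_n\rangle_{\RG}$ in each degree $d$ equals the number of secondary invariants of that degree. Graded Nakayama then states that a family of homogeneous invariants is an $A$-basis of $\RG$ if and only if it projects to a $\kk$-basis of $\RG/A_+\RG$, and under the degree hypothesis on $S$ this gives (i)$\Leftrightarrow$(ii).

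For (ii)$\Leftrightarrow$(iii), the crucial lemma is
\[ \langle\Theta_1,\dots,\Theta_n\rangle_\R\cap\RG=\langle\Theta_1,\dots,\Theta_n\rangle_{\RG}. \]
If $p\in\RG$ writes as $p=\sum q_i\Theta_i$ with $q_i\in\R$, applying the Reynolds operator $R$, which is $\RG$-linear and fixes the invariants $\Theta_i$, yields $p=R(p)=\sum R(q_i)\Theta_i$ with $R(q_i)\in\RG$. Because the ideal $\langle\Theta_1,\dots,\Theta_n\rangle_\R$ is $G$-stable, $R$ descends to a projection of $\R/\langle\Theta_1,\dots,\Theta_n\rangle_\R$ onto its $G$-fixed subspace, and combined with the lemma above this produces a graded isomorphism $\RG/\langle\Theta_1,\dots,\Theta_n\rangle_{\RG}\cong(\R/\langle\Theta_1,\dots,\Theta_n\rangle_\R)^G$. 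Direction (ii)$\Rightarrow$(iii) is then immediate; for (iii)$\Rightarrow$(ii), a family $S\subset\RG$ free in $\R/\langle\Theta_1,\dots,\Theta_n\rangle_\R$ is in particular free in the $G$-fixed subspace, hence free in $\RG/\langle\Theta_1,\dots,\Theta_n\rangle_{\RG}$, and the degree-by-degree dimension count from the previous paragraph upgrades freeness to a basis.

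The main obstacle is the Reynolds-operator bridge: once it is in place, the remaining arguments are standard Cohen--Macaulay Hilbert series bookkeeping. No subtle $G$-representation theory is needed beyond averaging, since the non-modular assumption $\operatorname{char}\kk\nmid|G|$ makes Reynolds a well-behaved graded projection at every level.
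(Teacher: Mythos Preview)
The paper states this proposition without proof, as a standard preliminary fact (the surrounding section cites \cite{Kemper.1998,Kemper_Derksen.CIT.2002,Sturmfels.AIT} for background). Your argument is correct and is essentially the textbook proof: the Reynolds identity $\langle\Theta_1,\dots,\Theta_n\rangle_\R\cap\RG=\langle\Theta_1,\dots,\Theta_n\rangle_{\RG}$ via $\RG$-linearity of $R$ is precisely the expected bridge between (ii) and (iii), and the graded Nakayama/Hilbert-series count is the standard way to link free $A$-module bases with $\kk$-bases of $\RG/A_+\RG$. There is nothing to correct.
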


The central problem is how to compute efficiently inside one of the quotients
$\R / \langle \Theta_1,\dots,\Theta_n\rangle_{\R}$ or $\RG / \langle
\Theta_1,\dots,\Theta_n\rangle_{\RG}$. Most algorithms rely on $(iii)$
using normal form reductions w.r.t. the Gröbner basis for
$\Theta_1,\dots,\Theta_n$ which was calculated in the first step to
prove that they form a system of parameters. The drawback is that
Gröbner basis and normal form calculations do not preserve symmetries;
hence they cannot be used to confine the calculations into a small
subspace of $\R / \langle \Theta_1,\dots,\Theta_n\rangle_{\R}$.
Besides, even the Gröbner basis calculation itself can be intractable
for moderate size input ($n=8$)\TODO{the benchmark should mark all
  groups for which this blows up!} in part due to the large
multiplicity ($d_1\cdots d_n$) of the unique root $\x=0$ of this
system.
\TODO{(Note of the referee : think to the product of initial degree of a Gröbner basis of a galoisian ideal of the polynomial
$x^n$ when the group is a permutation group)}

An other approach is to use $(ii)$. Then, in many cases, one can make
use of the symmetries to get a compact representation of invariant
polynomials. For example, if $G$ is a permutation group, an invariant
can be represented as a linear combination of orbitsums instead of a
linear combination of monomials, saving a factor of up to $|G|$ (see
e.g.~\cite{Thiery.CMGS.2001}). Furthermore, one can use SAGBI-Gröbner
bases (an analogue of Gröbner basis for ideals in subalgebras of
polynomial rings) to compute in the quotient
(see~\cite{Thiery.CMGS.2001,Faugere_Rahmany.2009.SAGBIGroebner}). However
SAGBI and SAGBI-Gröbner basis tend to be large (in fact, they are
seldom finite, see~\cite{Thiery_Thomasse.SAGBI.2002}), even when
truncated.

\TODO{We would need some explicit figures for the argument to be
  convincing; e.g. give the size per degree of the SAGBI-Gröbner basis
  of the ideal generated by $\Sym^+$ for some permutation group $G$.}

In both cases, it is hard to derive a meaningful bound on the
complexity of the algorithm, by lack of control on the behavior of the
(SAGBI)-Gröbner basis calculation.\TODO{Here we would need to discuss
with e.g. Schost to argue properly}
%
%
In the following section, we propose to calculate in the quotient $\RG
/ \langle \Theta_1,\dots,\Theta_n\rangle_{\RG}$ using instead
evaluation techniques.

\TODO{Better title!}

\section{Quotienting by evaluation}
\label{section.quotient}

\newcommand{\rhoroot}{\boldsymbol\rho}

Recall that, in the good cases, an efficient mean to compute modulo an
ideal is to use evaluation on its roots.
\begin{proposition}
  \label{proposition.semisimple_quotient}
  Let $P$ be a system of polynomials in $\R$ admitting a finite set
  $\rhoroot_1,\dots,\rhoroot_r$ of multiplicity-free roots, and let $I$ be the
  dimension $0$ ideal they generate. Endow further $\kk^r$ with the
  pointwise (Hadamard) product. Then, the evaluation map:
  \begin{displaymath}
    \begin{array}{clll}
      \Phi: & \R &\longrightarrow &\kk^r\\
            & p  &\longmapsto  &(p(\rhoroot_1),\dots,p(\rhoroot_r))
    \end{array}
  \end{displaymath}
  induces an isomorphism of algebra from $\R/I$. In particular, $\R/I$
  is a semi-simple basic algebra, a basis of which is given by the $r$
  idempotents $(p_i)_{i=1,\dots,r}$ which satisfy
  $p_i(\rhoroot_j)=\delta_{i,j}$; those idempotents can be constructed
  by multivariate Lagrange interpolation\TODO{find a ref}, or using
  the Buchberger-Möller algorithm~\cite{Buchberger_Moller.1982}.
\end{proposition}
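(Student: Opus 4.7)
The plan is to verify that the evaluation map $\Phi$ is a $\kk$-algebra morphism, identify its kernel with $I$, and then argue surjectivity by explicitly exhibiting a family of Lagrange-type interpolation polynomials.

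First I would observe that, by the very definition of the coordinatewise product on $\kk^r$, the map $\Phi$ is a morphism of $\kk$-algebras; and since every $\rhoroot_j$ is by assumption a zero of every generator of $I$, we have $I \subseteq \ker \Phi$, so $\Phi$ descends to a well-defined morphism $\bar\Phi : \R/I \to \kk^r$.

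Next I would establish surjectivity by constructing, for each $i=1,\dots,r$, a polynomial $p_i \in \R$ satisfying $p_i(\rhoroot_j) = \delta_{i,j}$. Since the roots are pairwise distinct, for each $j \ne i$ there is some coordinate $k = k(i,j) \in \{1,\dots,n\}$ such that $(\rhoroot_i)_k \ne (\rhoroot_j)_k$; then the affine polynomial $q_{ij} := (x_k - (\rhoroot_j)_k)/((\rhoroot_i)_k - (\rhoroot_j)_k)$ vanishes at $\rhoroot_j$ and takes value $1$ at $\rhoroot_i$. The product $p_i := \prod_{j\ne i} q_{ij}$ then satisfies $\Phi(p_i) = e_i$, where $(e_1,\dots,e_r)$ is the canonical basis of $\kk^r$. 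In particular $\Phi$, hence $\bar\Phi$, is surjective, and the images of the $p_i$ form a basis of the target; being idempotents in $\kk^r$, they are also idempotents in the quotient $\R/I$, and they are linearly independent in $\R/I$ since their images under $\bar\Phi$ are.

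The main obstacle is injectivity of $\bar\Phi$, i.e.\ the reverse inclusion $\ker\Phi \subseteq I$. The hypothesis that the roots are multiplicity-free is precisely what guarantees that the zero-dimensional ideal $I$ equals its own radical: the quotient $\R/I$ localized at each maximal ideal corresponding to a root $\rhoroot_j$ is reduced of dimension~$1$. Assuming (by passing to $\bar\kk$ if necessary, and then descending, or by working over a field containing the coordinates of all $\rhoroot_j$) that we may apply Hilbert's Nullstellensatz, we get
\begin{displaymath}
  \ker \Phi \;=\; \{p \in \R : p(\rhoroot_j)=0 \text{ for all } j\}
           \;=\; \mathcal{I}(V(I)) \;=\; \sqrt{I} \;=\; I,
\end{displaymath}
so $\bar\Phi$ is injective, hence an isomorphism.

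Finally, I would conclude by transporting the semi-simple basic algebra structure of $\kk^r \cong \kk \times \cdots \times \kk$ back to $\R/I$ via $\bar\Phi^{-1}$; the primitive idempotents of $\kk^r$ are exactly the $e_i$, so the $p_i$ constructed above form a basis of $\R/I$ by primitive orthogonal idempotents, as claimed.
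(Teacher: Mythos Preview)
The paper does not actually prove this proposition: it is introduced with ``Recall that, in the good cases, an efficient mean to compute modulo an ideal is to use evaluation on its roots'' and then stated as a standard fact, with only a reference to the Buchberger--M\"oller algorithm for the construction of the idempotents. So there is no proof in the paper to compare against.

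Your argument is correct and fills in the details that the paper omits. One remark: the detour through the Nullstellensatz and the passage to $\bar\kk$ is a bit heavier than needed. Since the roots $\rhoroot_1,\dots,\rhoroot_r$ are assumed to lie in $\kk^n$ and to be multiplicity-free, the hypothesis amounts precisely to $\dim_\kk \R/I = r$ (this is the usual meaning of ``multiplicity-free'' for a zero-dimensional ideal: the local length at each point is $1$). Having already shown that $\bar\Phi:\R/I\to\kk^r$ is surjective via your Lagrange interpolators, a dimension count then gives injectivity immediately, without invoking $\sqrt{I}=I$ or the Nullstellensatz. This is both shorter and avoids any worry about the ground field.
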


This proposition does not apply directly to the ideal $\langle
\Theta_1,\dots,\Theta_n\rangle$ because it has a single root with a very
high multiplicity $d_1\dots d_n$. The central idea of this paper is to blowup this
single root by considering instead the ideal $\langle
\Theta_1,\dots,\Theta_{n-1},\Theta_n-\q\rangle$, where $\q$ is a non
zero constant, and then to show that the grading can be used to
transfer back the result to the original ideal, modulo minor
complications. This approach is a priori general: assuming the field
is large enough, the ideal
$\Theta_1,\dots,\Theta_n$ can always be slightly perturbed to admit
$d_1\cdots d_n$ multiplicity-free roots; those roots are obviously
stable under the action of $G$, and can be grouped into orbits. 
Yet it can be non trivial to compute and describe those roots.

For the sake of simplicity of exposition, we assume from now on that
$G$ is a permutation group, that $\Theta_1,\dots,\Theta_n$ are the
elementary symmetric functions $e_1,\dots,e_n$, and that
$\q=(-1)^{n+1}$. Finally we assume that the ground field $\kk$
contains the $n$-th roots of unity; this last assumption is reasonable
as, roughly speaking, the invariant theory of a group depends only on
the characteristic of $\kk$. With those assumptions, the roots
$\rhoroot_i$ take a particularly nice and elementary form, and open
connections with well know combinatorics. Yet we believe that this
case covers a wide enough range of groups (and applications) to
contain all germs of generality. In particular, the results presented
here should apply mutatis mutandis to any subgroup $G$ of a complex
reflection group.

\begin{remark}
  Let $\rho$ be a $n$-th primitive root of unity, and set
  $\rhoroot:=(1,\rho,\dots,\rho^{n-1})$. Then,
  $e_1(\rhoroot)=\cdots=e_{n-1}(\rhoroot) = 0$ and $e_n(\rhoroot)=\q$.
\end{remark}
\begin{proof}
  Up to sign, $e_i(\rhoroot)$ is the $i$-th coefficient of the
  polynomial
  \begin{displaymath}
    (X^n - 1) = \displaystyle\prod_{i = 0}^{n-1} (X - \rho^i)\,.\qedhere
  \end{displaymath}
\end{proof}

For $\sigma\in \sg$, write $\rhoroot_\sigma:=\sigma\cdot\rhoroot$ the
permuted vector. It follows from the previous remark that the orbit
$(\rhoroot_\sigma)_{\sigma\in\sg}$ of $\rhoroot$ gives all the roots
of the system
\begin{displaymath}
  e_1(\x)=\cdots=e_{n-1}(\x)=e_n(\x)-\q=0\,.
\end{displaymath}
Let $\I$ be the ideal generated by $e_1,\dots,e_{n-1},e_n-\q$ in $\R$,
that is the ideal of symmetric relations among the roots of the
polynomial $X^n-1$; it is well known that the quotient $\R/\I$ is of
dimension $n!$\TODO{ref?}. We define the evaluation map $\Phi: p \in
\R \mapsto (p(\rhoroot_\sigma))_\sigma$ as in
Proposition~\ref{proposition.semisimple_quotient} to realize the
isomorphism from $\R/\I$ to $\E = \kk^{\sg}$.

Obviously, the evaluation of an invariant polynomial $p$ is constant
along $G$-orbits. This simple remark is the key for confining the
quotient computation into a small subspace of dimension $n!/|G|$,
which is precisely the number of secondary invariants. Let $\EG$ be
the subalgebra of the functions in $\E$ which are constant along
$G$-orbits. Obviously, $\EG$ is isomorphic to $\kk^L$ where $L$ is any
transversal of the right cosets in $\sg/G$. Let $\IG$ be the ideal
generated by $(e_1,\dots,e_{n-1},e_n-\q)$ in $\RG$; as the notation
suggests, it is the subspace of invariant polynomials in $\I$.
\begin{remark}\label{remark.isoquotientinvariant}
  The restriction of $\Phi$ on $\RG$, given by:
  \begin{displaymath}
    \begin{array}{clll}
      \Phi: & \RG &\longrightarrow &\EG\\
            & p  &\longmapsto  &(p(\rhoroot_\sigma))_{\sigma\in L}
    \end{array}
  \end{displaymath}
  is surjective and induces an algebra isomorphism between $\RG/\IG$
  and $\EG$.
\end{remark}
\begin{proof}
  For each evaluation point $\rhoroot_\sigma$, $\sigma\in L$, set
  \begin{displaymath}
    \overline p_{\rhoroot_\sigma} := \sum_{\tau\in \sigma G} p_{\rhoroot_\tau}\,,
  \end{displaymath}
  where $p_{\rhoroot_\tau}$ is the Lagrange interpolator of
  Proposition~\ref{proposition.semisimple_quotient}. Then, their
  images $(\Phi(\overline p_{\rhoroot_\sigma}))_{\sigma\in L}$ are
  orthogonal idempotents and, by dimension count, form a basis of $\EG$.
\end{proof}
From now on, we call evaluation points the family
$(\rhoroot_\sigma)_{\sigma\in L}$.

We proceed by showing that the grading can be used to compute modulo
the original ideal $\langle e_1,\dots,e_n\rangle$, modulo minor
complications.
\begin{lemma}\label{keylemma}
  Let $G$ be a subgroup of $\sg$ and $\kk$ be a field of
  characteristic $0$ containing a primitive $n$-th root of unity. Let
  $\SecInv$ be a set of secondary invariants w.r.t. the primary invariants
  $e_1,\dots,e_n$, and write $\langle \SecInv \rangle_\kk$ for the vector
  space they span (equivalently, one could choose a graded
  supplementary of the graded ideal $\langle e_1,\dots,e_n\rangle$ in
  $\RG$). Write $\SecInv[d]$ for the secondary invariants of degree
  $d$. Then,
  \begin{displaymath}
    \begin{array}{rl}
      \text{for } 0 \leqslant d < n : & \Phi(\R_d^G) = \Phi(\langle \SecInv[d]\rangle_{\kk})\,, \\
      \text{for } d \geqslant n : & \Phi(\R_d^G) = \Phi(\langle \SecInv[d]\rangle_{\kk}) \oplus \Phi(\R_{d-n}^G)\,.\\
    \end{array}
  \end{displaymath}
  In particular, $\Phi$ restricts to an isomorphism from $\langle \SecInv \rangle_\kk$ to $\EG$.
\end{lemma}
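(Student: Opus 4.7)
The plan is to exploit the Hironaka decomposition $\RG = \bigoplus_{\eta_i \in \SecInv} \eta_i \cdot \kk[e_1, \ldots, e_n]$ together with the fact that $\Phi$ sends each $e_j$ with $j < n$ to the constant function $0$ and $e_n$ to the constant function $\q$. Writing any $p \in \RG$ uniquely as $p = \sum_i \eta_i f_i(e_1, \ldots, e_n)$, the morphism property of $\Phi$ yields the key identity
\begin{displaymath}
\Phi(p) = \sum_i f_i(0, \ldots, 0, \q)\, \Phi(\eta_i).
\end{displaymath}
In particular, $\EG = \Phi(\RG)$ (by Remark~\ref{remark.isoquotientinvariant}) is spanned by the family $\{\Phi(\eta_i)\}_i$; since $|\SecInv| = n!/|G| = \dim \EG$, this family is a basis of $\EG$. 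This already delivers the last assertion: $\Phi$ restricts to an isomorphism $\langle \SecInv \rangle_\kk \to \EG$.

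For the graded statement, I would then specialize to $p \in \RG_d$ homogeneous. Each non-zero $f_i$ in the decomposition is weighted-homogeneous of weight $d - \deg \eta_i$, with $e_j$ of weight $j$. Consequently $f_i(0, \ldots, 0, \q)$ picks out only the coefficient of a pure power $e_n^k$, and so vanishes unless $d - \deg \eta_i = nk$ for some integer $k \geq 0$. When $d < n$, only $k = 0$ is possible, forcing $\deg \eta_i = d$ and thus $\Phi(p) \in \Phi(\langle \SecInv[d] \rangle_\kk)$; the reverse inclusion is trivial. When $d \geq n$, I split the sum into the $k = 0$ contributions (which lie in $\Phi(\langle \SecInv[d] \rangle_\kk)$) and the $k \geq 1$ contributions. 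For the latter, $\Phi(\eta_i) = \q^{-(k-1)} \Phi(e_n^{k-1} \eta_i)$ with $e_n^{k-1} \eta_i \in \RG_{d-n}$, so these contributions live in $\Phi(\RG_{d-n})$. Conversely, multiplication by $e_n$ sends $\RG_{d-n}$ into $\RG_d$ and scales $\Phi$ by $\q$, giving $\Phi(\RG_{d-n}) \subseteq \Phi(\RG_d)$. This establishes the sum decomposition.

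The main (and only subtle) obstacle is to prove that this sum is direct. The plan is to exploit the basis property obtained above: given $q \in \Phi(\langle \SecInv[d] \rangle_\kk) \cap \Phi(\RG_{d-n})$, write $q$ in two ways as a $\kk$-linear combination of the basis $\{\Phi(\eta_i)\}_i$ of $\EG$—once with support in $\SecInv[d]$, and once, via the key identity applied to some $p' \in \RG_{d-n}$, with support among secondary invariants of degree at most $d - n$. These two supports are disjoint subsets of $\SecInv$, so the linear independence of $\{\Phi(\eta_i)\}_i$ forces all coefficients, and hence $q$ itself, to vanish.
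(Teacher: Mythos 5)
Your proposal is correct and follows essentially the same route as the paper's proof: both rest on the Hironaka decomposition, the observation that $\Phi(e_1)=\dots=\Phi(e_{n-1})=0$ while $\Phi(e_n)$ is a nonzero constant, a dimension count to see that the images of the secondary invariants form a basis of $\EG$, and the fact that $e_n$ has degree $n$ to split $\Phi(\R_d^G)$ along degrees modulo $n$. Your treatment is somewhat more explicit than the paper's (weighted homogeneity of the $f_i$, and directness via disjoint supports in the basis rather than the paper's terse ``by dimension count \dots by induction''), but it is the same argument.
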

\begin{proof}
  For ease of notation, we write the Hironaka decomposition by
  grouping the secondary invariants by degree:
  \begin{displaymath}
    \RG = \bigoplus_{i=1}^t \eta_i \kk[e_1, \dots , e_n] =
    \bigoplus_{d=0}^{d_{\max}} \langle \SecInv[d] \rangle_\kk \kk[e_1, \dots , e_n]\,,
  \end{displaymath}
  where $d_{\max}$ is the highest degree of a secondary invariant. Then, using that
  \begin{displaymath}
    \begin{array}{l}
      \Phi(e_1) = \cdots = \Phi(e_{n-1}) = 0_\EG
      \quad \text{ and } \quad
      \Phi(e_n) = 1_\EG\,, \\
    \end{array}
  \end{displaymath}
  we get that $\Phi(\kk[e_1,\dots,e_n])=\Phi(\kk[e_n])=\kk.1_\EG$, and thus:
  \begin{displaymath}
    \EG = \Phi(\RG) = \sum_{d=1}^{d_{\max}} \Phi(\langle \SecInv[d] \rangle_\kk) \Phi(\kk[e_1,\dots,e_n]) = \sum_{d=1}^{d_{\max}} \Phi(\langle \SecInv[d] \rangle_\kk)\,,
  \end{displaymath}
  where, by dimension count, the sum is direct.
  Using further that $e_n$ is of degree $n$:
  \begin{alignat*}{4}
    \Phi(\R_d^G) &= \Phi(\langle \SecInv[d]\rangle_\kk) &&+ \Phi(\langle \SecInv[d-n]\rangle_\kk e_n) &&+ \Phi(\langle\SecInv[d-2n]\rangle_{\kk} e_n^2) &&+ \cdots\\
      &= \Phi(\langle \SecInv[d]\rangle_\kk) &&\oplus \Phi(\langle \SecInv[d-n]\rangle_\kk) &&\oplus \Phi(\langle\SecInv[d-2n]\rangle_{\kk}) &&\oplus \cdots
  \end{alignat*}
  The desired result follows by induction.
\end{proof}

In practice, this lemma adds to
Proposition~\ref{proposition.secondary} two new equivalent
characterizations of secondary invariants:
\begin{theorem}\label{theorem.secondary.evaluation}
  Let $G\subset \sg[n]$ be a permutation group, take $e_1,\dots,e_n$
  as primary invariants, and let $S=(\eta_1,\dots,\eta_t)$ be a family
  of homogeneous invariants with the appropriate degrees. Then, the
  following are equivalent:
  \begin{enumerate}[(i)]
  \item $S$ is a set of secondary invariants;
  \item[(iv)] $\Phi(S)$ forms a basis of $\EG$;
  \item[(v)] The elements of $\Phi(\SecInv[d])$ are linearly independent in
    $\EG$, modulo the subspace
    \begin{displaymath}
      \sum_{0\leq j < d, \ n \,|\, d-j} \langle \Phi(S_j)\rangle_\kk\,.
    \end{displaymath}
  \end{enumerate}
  Furthermore, when any, and therefore all of the above hold, the sum
  in $(v)$ is a direct sum.
\end{theorem}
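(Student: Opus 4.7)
The plan is to establish the cycle (i)$\Rightarrow$(iv)$\Rightarrow$(v)$\Rightarrow$(iv)$\Rightarrow$(i), with the second pass through (iv) serving as a convenient intermediate for closing back to (i). The first implication is immediate from the last sentence of Lemma~\ref{keylemma}: if $S$ is a family of secondary invariants, then $\Phi$ restricts to an isomorphism $\langle S\rangle_\kk\to\EG$, and since $|S|=t=\dim\EG$, its image $\Phi(S)$ is a basis. The implication (iv)$\Rightarrow$(v) is essentially formal: if $\Phi(S)$ is a basis of $\EG$, then $\Phi(S_d)$ remains linearly independent modulo the span of the other $\Phi(S_j)$'s, and in particular modulo $W_d:=\sum_{0\le j<d,\,n\mid d-j}\langle\Phi(S_j)\rangle_\kk$, with $\langle\Phi(S_d)\rangle_\kk+W_d$ automatically a direct sum.

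The main work lies in (v)$\Rightarrow$(iv), where I would argue by strong induction on $d$ that
\[
  \Phi(\R_d^G)=\bigoplus_{\substack{j\equiv d\pmod n\\ j\le d}}\langle\Phi(S_j)\rangle_\kk.
\]
For the step, two inclusions come for free: $\Phi(\langle S_d\rangle_\kk)\subseteq\Phi(\R_d^G)$ since $S_d\subseteq\R_d^G$, and $\Phi(\R_{d-n}^G)\subseteq\Phi(\R_d^G)$ since $\Phi(e_n)=1_\EG$. By induction $\Phi(\R_{d-n}^G)=W_d$; by (v) the sum $\langle\Phi(S_d)\rangle_\kk+W_d$ is direct, of dimension $s_d+\dim W_d=\sum_{j\equiv d,\,j\le d}s_j$ where $s_j$ denotes the number of secondaries in degree $j$. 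On the other hand, applying Lemma~\ref{keylemma} to any \emph{a priori} existing set $\tilde S$ of secondary invariants (given by Cohen--Macaulayness) and iterating yields $\dim\Phi(\R_d^G)=\sum_{j\equiv d,\,j\le d}s_j$ as well. Matching dimensions forces the inclusion to be an equality. Taking $d$ large in each residue class mod $n$ expresses $\EG$ as the sum $\sum_r E_r$ with $E_r:=\bigoplus_{j\equiv r}\langle\Phi(S_j)\rangle_\kk$, and a global count $\sum_r\dim E_r=\sum_j s_j=t=\dim\EG$ forces the sum over residues to be direct, so $\EG=\bigoplus_j\langle\Phi(S_j)\rangle_\kk$ and $\Phi(S)$ is a basis.

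To close with (iv)$\Rightarrow$(i), I would invoke Proposition~\ref{proposition.secondary}(ii) and show $S$ is a basis of $\RG/\langle e_1,\dots,e_n\rangle_{\RG}$. Since $|S|=t$ already matches the dimension of this quotient, only independence remains, which I check degree by degree. Given a homogeneous relation $\sum_{\eta_i\in S_d}c_i\eta_i=\sum_k Q_k e_k$ with the $Q_k\in\RG$ (which one may assume by applying the Reynolds operator), applying $\Phi$ collapses the right-hand side to $\Phi(Q_n)\in\Phi(\R_{d-n}^G)=W_d$, while the left lies in $\langle\Phi(S_d)\rangle_\kk$; the decomposition just established forces the common value to vanish, and the independence of $\Phi(S_d)$ given by (iv) yields $c_i=0$. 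The directness of the sum appearing in~(v), asserted at the end of the theorem, is precisely what the inductive step of (v)$\Rightarrow$(iv) delivers. The main obstacle I anticipate is the bookkeeping in that induction: one must juggle two dimension counts---one coming from $S$ via hypothesis (v), the other from an abstract $\tilde S$ via Lemma~\ref{keylemma}---and show they agree without circularity, which is precisely the role of the auxiliary $\tilde S$.
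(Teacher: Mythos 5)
Your argument is correct and takes essentially the same route as the paper's (very terse) proof: Lemma~\ref{keylemma} yields (i)$\Rightarrow$(iv) directly and, applied to a pre-existing Hironaka decomposition $\tilde S$, supplies the dimension count $\dim\Phi(\R_d^G)=\sum_{j\equiv d,\ j\le d}s_j$ that drives your induction on the degree --- precisely the ``recursion for the direct sum'' the paper alludes to. Your explicit closing of the loop (iv)$\Rightarrow$(i) via Proposition~\ref{proposition.secondary}(ii) is a sound elaboration of what the paper leaves implicit.
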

\begin{proof}
  Direct application of Lemma~\ref{keylemma}, together with recursion
  for the direct sum.
\end{proof}

\begin{example}
  Let $G=\mathcal A_3=\langle (1,2,3)\rangle$ be the alternating group
  of order $3$. In that case, $\rho$ is the third root of unity $j$,
  and $\KK=\QQ(j)=\QQ\oplus_\QQ \QQ.j \oplus_\QQ \QQ.j^2$.
  We are looking for $n!/|G|=2$ secondary invariants, whose degree
  are given by the numerator of the Hilbert series:
  \begin{displaymath}
    \Hilb[\RG] = \frac 1 3 \left( \frac{1}{(1-z)^3} + 2 \frac{1}{(1-z^3)} \right) =
    \frac{1+z^3}{(1-z)(1-z^2)(1-z^3)}
  \end{displaymath}
  Simultaneously, the $\sg[n]$-orbit of $(1,\rho,\rho^2)$ splits in
  two $G$-orbits. We can, for example, take as evaluation points the
  two $G$-orbit representatives $\rhoroot_{()} = (1,\rho,\rho^2)$ and
  $\rhoroot_{(2,1)}=(\rho,1,\rho^2)$, and the evaluation morphism is
  given by:
  \begin{displaymath}
    \begin{array}{clll}
      \Phi: & \RG &\longrightarrow &\EG=\KK^2\\
            & p  &\longmapsto  &(p(\rhoroot_{()}), p(\rhoroot_{(1,2)})))
    \end{array}
  \end{displaymath}
  For example, $\Phi(1)=\Phi(e_3)=(1,1)$, whereas
  $\Phi(e_1)=\Phi(e_2)=0$. Let us evaluate the orbitsum of the
  monomial $x_1^2x_2 = \x^(2,1,0)$, using
  Remark~\ref{remark.evaluation.orbitsum}:
  \begin{alignat*}{3}
    &\orbitsum{\x^{(2,1,0)}}(\rhoroot_{()}) &=&
    j^{\langle (2,1,0), (0,1,2)\rangle} +
    j^{\langle (2,1,0), (1,2,0)\rangle} +
    j^{\langle (2,1,0), (2,0,1)\rangle} &=&
    3j\,,\\
    &\orbitsum{\x^{(2,1,0)}}(\rhoroot_{(1,2)}) &=&
    j^{\langle (2,1,0), (1,0,2)\rangle} +
    j^{\langle (2,1,0), (0,2,1)\rangle} +
    j^{\langle (2,1,0), (2,1,0)\rangle} &=&
    3j^2\,.
  \end{alignat*}
  That is $\Phi(\orbitsum{x_1^2x_2})=3.(j,j^2)$. It follows that:
  \begin{align*}
    \Phi(\RG_0)&=\KK.(1,1)\\
    \Phi(\RG_1)&= \Phi(\RG_2) = \{(0,0)\}\\
    \Phi(\RG_3)&=\KK.(1,1)\oplus \KK.(3,3) =
    \KK.\Phi(1) \oplus \KK.\Phi(\orbitsum{x_1^2x_2})\,.
  \end{align*}
  In particular, $1$ and $\orbitsum{x_1^2x_2}$ are two secondary
  invariants, both over $\KK$ or $\QQ$:
  \begin{displaymath}
    \RG = \Sym \oplus \Sym.\orbitsum{x_1^2x_2}\,.
  \end{displaymath}

  We consider now the two extreme cases. For $G=\sg[n]$, there is a
  single evaluation point and a single secondary invariant $1$; and
  indeed, $\Phi(1)=(1)$ spans $\Phi(\RG) = \KK$. Take now $G=\{()\}$
  the trivial permutation group on $n$ points. Then, the evaluation
  points are the permutations of $(1,j,j^2,\dots,j^{n-1})$. In that
  case, Theorem~\ref{theorem.secondary.evaluation} states in
  particular that the matrix $(j^{\langle m, \sigma})_{m,\sigma}$,
  where $m$ and $\sigma$ run respectively through the integer vectors
  below the staircase and through $\sg[n]$, is non singular.\TODO{is
    this obvious?}
\end{example}

\section{An algorithm for computing secondary invariants by evaluation}
\label{section.algorithm}

Algorithm~\ref{algorithm} is a straightforward adaptation of the
standard algorithm to compute secondary invariants in order to use the
evaluation morphism $\Phi$ together with
Theorem~\ref{theorem.secondary.evaluation}.
\begin{algorithm}
  \def\I{\operatorname{I}}
  \def\E{\operatorname{E}}
  \def\S{\operatorname{S}}
  \def\leftarrow{=}
  \caption{Computing secondary invariants and irreducible secondary
    invariants of a permutation group $G$, w.r.t. the symmetric
    functions as primary invariants, and using the evaluation morphism
    $\Phi$.}
  \label{algorithm}
  \raggedright

  We assume that the following have been precomputed from the Hilbert
  series:
  \begin{itemize}
  \item $s_d$: the number of secondary invariants of degree $d$\\
    (this is the coefficient of degree $d$ of $\HilbSec[\RG]$)
    
  \item $e_d$: the dimension of $\dim \Phi(\RG_d)$\\
    (this is $s_d$ if $d<n$ and $e_{d-n}+s_d$ otherwise)
  \end{itemize}

  At the end of each iteration of the main loop:
  \begin{itemize}
  \item $\S_d$ is a set $\SecInv[d]$ of secondary invariants of degree $d$;
  \item $\I_d$ is a set of irreducible secondary invariants of degree $d$;
  \item $E_d$ models the vector space $\Phi(\RG_d)$.
  \end{itemize}

  Code, in pseudo-\python syntax:
\begin{displaymath}
  \begin{small}
  \begin{array}{l}
    \textbf{def }\text{SecondaryInvariants(G)} : \\
    \ind \textbf{for } d \in \{0, 1, 2, \dots , \deg(\HilbSec[\RG])\}: \\
    \ind \ind \I_d \leftarrow \{\}\\
    \ind \ind \S_d \leftarrow \{\}\\
    \ind \ind \textbf{if } d \geqslant n :\\
    \ind \ind \ind \E_d \leftarrow \E_{d-n} \comm[Defect of direct sum of Theorem~\ref{theorem.secondary.evaluation}]\\
    \ind \ind \textbf{else} : \\
    \ind \ind \ind \E_d \leftarrow \{\vec 0\} \\
    \ind \ind \text{\# Consider all products of secondary invariants of lower degree} \\
    \ind \ind \textbf{for } (\eta, \eta') \in \S_k \times \I_l \text{ with } k+l = d : \\
    \ind \ind \ind \textbf{if } \Phi(\eta\eta') \notin \E_d:\\
    \ind \ind \ind \ind \S_d \leftarrow \S_d \cup \{\eta\eta'\} \\
    \ind \ind \ind \ind \E_d \leftarrow \E_d \oplus \kk. \Phi(\eta\eta') \\
    \ind \ind \text{\# Complete with orbitsums of monomials under the staircase} \\
    \ind \ind \textbf{for } m \in \text{CanonicalMonomialsUnderStaircaseOfDegree(d)} : \\
    \ind \ind \ind \textbf{if } \dim \E_d == e_d: \\
    \ind \ind \ind \ind \textbf{break} \comm[All secondary invariants were found]\\
    \ind \ind \ind \eta \leftarrow \text{OrbitSum}(m) \\
    \ind \ind \ind \textbf{if } \Phi(\eta) \notin \E_d : \\
    \ind \ind \ind \ind \I_d \leftarrow \I_d \cup \{\eta\} \\
    \ind \ind \ind \ind \S_d \leftarrow \S_d \cup \{\eta\} \\
    \ind \ind \ind \ind \E_d \leftarrow \E_d \oplus \kk.\Phi(\eta) \\
    \ind \textbf{return } (\{\S_0, \S_1, \dots \}, \{\I_0, \I_1, \dots \})
  \end{array}
  \end{small}
\end{displaymath}
\end{algorithm}

For the sake of the upcoming complexity analysis, we now detail how
the required new invariants in each degree can be generated and
evaluated in the case of a permutation group.

It is well known that the ring $\R$ is a free $\Sym$-module of rank
$n!$. It admits several natural bases over $\Sym$, including the
Schubert polynomials, the descent monomials, and the monomials under
the staircase. We focus on the later. Namely, encoding a monomial
$m=\x^\alpha$ in $\R$ by its exponent vector $\alpha=(\alpha_1, \dots
, \alpha_n)$, $m$ is \emph{under the staircase} if $\alpha_i \leqslant
n-i$ for all $1 \leqslant i \leqslant n$. Given a permutation group $G
\subset \sg[n]$, a monomial $m$ is \emph{canonical} if $m$ is maximal
in its $G$-orbit for the lexicographic order: $\sigma(m)
\leqslant_{\lex} m$, $\forall \sigma \in G$. The following lemma is a
classical consequence of the Reynolds operator being a $\RG$-module
morphism.
\begin{lemma}
  \label{lemma.spanning}
  Let $M$ be a family of polynomials which spans $\R$ as a
  $\Sym$-module. Then, the set of invariants $\{R(m) \mid m\in
  M\}$ spans $\RG$ as a $\Sym$ module. 

  In particular, taking for $M$ the set of monomials under the
  staircase, one gets that the orbitsums of monomials which are
  simultaneously canonical and under the staircase generate $\RG$ as a
  $\Sym$-module. One can further remove non zero integer partitions
  from this set.
\end{lemma}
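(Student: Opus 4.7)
The plan rests on two ingredients: the Reynolds operator $R$ is both a projection $\R \to \RG$ and an $\RG$-module morphism (hence $\Sym$-linear, since $\Sym \subset \RG$); and the monomials under the staircase form a free $\Sym$-basis of $\R$ (the classical Artin or coinvariant basis of $\R$ over $\Sym$). For the first assertion, I would take $f \in \RG$, write $f = \sum_i s_i m_i$ with $s_i \in \Sym$ and $m_i \in M$ (possible because $M$ spans $\R$ over $\Sym$), and apply $R$: combining $R(f) = f$ with $\Sym$-linearity gives $f = \sum_i s_i R(m_i)$, so $\{R(m) : m \in M\}$ spans $\RG$ over $\Sym$.

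Specializing $M$ to the monomials under the staircase, two easy observations shrink the generating set. First, for any monomial $m$, $R(m) = (|\mathrm{Stab}_G(m)|/|G|)\,\orbitsum{m}$ is a scalar multiple of the $G$-orbit sum; second, $R(m) = R(\sigma \cdot m)$ for $\sigma \in G$. Hence I may keep a single staircase representative per $G$-orbit, selecting the canonical (lex-maximal) one in each $G$-orbit intersecting the staircase; the associated orbit sums still span $\RG$ over $\Sym$.

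For the final refinement, the key identity is that for a non-zero partition $\lambda$ with $\x^\lambda$ under the staircase, the $\sg[n]$-orbit of $\x^\lambda$ decomposes into $G$-orbits with canonical representatives $\x^\lambda = \x^{\mu_1}, \x^{\mu_2}, \dots, \x^{\mu_k}$, whence
\begin{displaymath}
  m_\lambda = \orbitsum{\x^\lambda} + \sum_{i \geq 2} \orbitsum{\x^{\mu_i}}.
\end{displaymath}
Since $\lambda$ is the unique partition in its $\sg[n]$-orbit, each $\mu_i$ with $i \geq 2$ is a non-partition permutation of $\lambda$. Solving for $\orbitsum{\x^\lambda}$ expresses it as the symmetric polynomial $m_\lambda$ minus the orbit sums of those non-partition permutations, each of which in turn reduces, via the first paragraph, to canonical staircase orbit sums. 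The main obstacle is to verify, by a careful induction on degree and on a suitable monomial order, that $\orbitsum{\x^\lambda}$ cannot be reintroduced during the rewriting; this ultimately rests on the uniqueness of the partition in each $\sg[n]$-orbit together with a close analysis of which staircase monomials can appear in the coinvariant-basis expansion of a given monomial.
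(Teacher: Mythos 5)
Your first paragraph is precisely the paper's proof, and the paper's printed proof stops there: take $f\in\RG$, expand $f=\sum_i s_i m_i$ over $\Sym$, apply the Reynolds operator, and use that $R$ fixes $f$ and is a morphism of $\RG$-modules (hence $\Sym$-linear). So for the main assertion you are in exact agreement. The two ``in particular'' clauses are not argued in the paper at all, so your second and third paragraphs go beyond its printed proof; the second one (one orbit sum per $G$-orbit, via $R(m)=\bigl(|\mathrm{Stab}_G(m)|/|G|\bigr)\,\orbitsum{m}$ and $R(m)=R(\sigma\cdot m)$) is correct and is surely what the authors had in mind.

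For the last clause, the circularity you worry about is avoidable: instead of a rewriting induction, use graded Nakayama. A family of homogeneous invariants generates $\RG$ as a $\Sym$-module if and only if its image spans the graded quotient $\RG/\Sym^+\RG$; since $m_\lambda\in\Sym^+$ for $\lambda$ a non-zero partition, your identity immediately gives
\begin{displaymath}
  \orbitsum{\x^\lambda}\equiv-\sum_{i\geq 2}\orbitsum{\x^{\mu_i}}\pmod{\Sym^+\RG}\,,
\end{displaymath}
and there is no further rewriting to control. The one step that genuinely still needs an argument — in your sketch and, frankly, in the lemma as literally stated — is that each $\orbitsum{\x^{\mu_i}}$, $i\geq 2$, belongs to the retained generating set. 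The subtlety is that the lex-maximal element of a $G$-orbit meeting the staircase need not itself lie under the staircase (for $G=\mathcal A_3$ the orbit of $\x^{(0,1,2)}$ has canonical element $\x^{(2,0,1)}$, which violates $\alpha_3\leq 0$), and worse, a $G$-orbit inside the $\sg[n]$-orbit of a staircase partition may contain no staircase monomial at all (take $n=4$, $\lambda=(1,1,0,0)$, $G=\langle(1,3)(2,4)\rangle$: the vector $(0,1,0,1)$ is fixed by $G$ and no rearrangement of it in its $G$-orbit is under the staircase). So ``canonical and under the staircase'' must be read as ``canonical representative of a $G$-orbit whose sorted exponent vector fits under the staircase'' — this is in fact what the paper's Figure on $C'(G)$ counts — and with that reading your Nakayama-corrected argument closes, since every $\mu_i$ is then a retained generator. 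With the strict reading, both the removal of non-zero partitions and even the intermediate claim can fail, so do state the convention explicitly.
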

\begin{proof}
  Let $p \in \RG$ be an invariant polynomial, and write it as
  $p=\sum_{m\in M} f_m m$, where the $f_m$ are symmetric polynomials.
  Then, using that the Reynolds operator $R$ is a $\RG$-module
  morphism, one gets as desired that:
  \begin{displaymath}
    p = R(p) = R(\sum_{m\in M} f_m m) = \sum_{m\in M} f_m R(m)\,.\qedhere
  \end{displaymath}
\end{proof}


\begin{remark}
  \label{remark.canonical}
  The canonical monomials under the staircase can be iterated through
  efficiently using orderly generation~\cite{Read.1978,McKay.1998} and
  a strong generating system of the group
  $G$~\cite{Seress.2003.PermiutationGroupAlgorithms}; the complexity
  of this iteration can be safely bounded above by $\OO(n!)$, though
  in practice it is much better than that (see
  Figures~\ref{benchmarks.cannonics.average}
  and~\ref{benchmarks.cannonics.bound}, and~\cite{Borie.2011.Thesis}
  for details).
\end{remark}

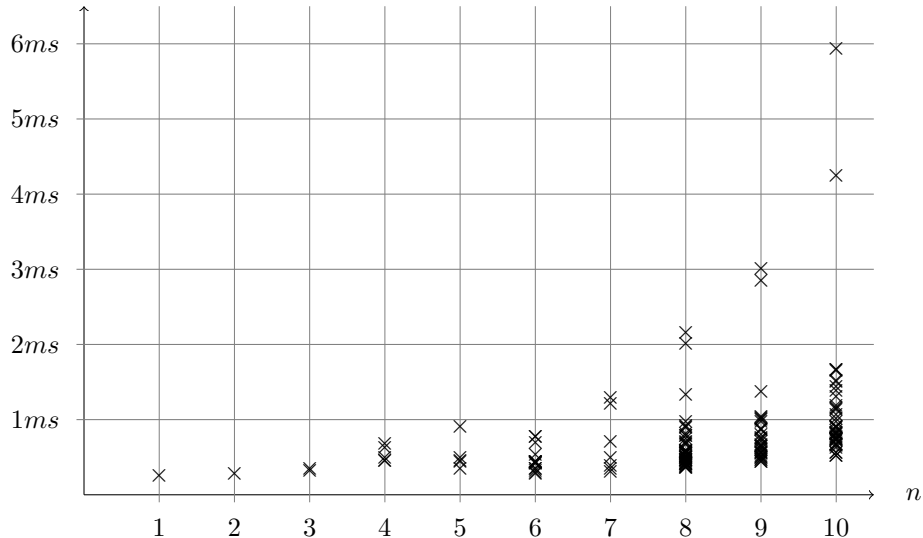
\begin{figure}
  \label{benchmarks.cannonics.average}
  \centering
    \begin{tikzpicture}
    \tikzstyle{bull}=[minimum size=4pt, inner sep=0pt] 
    \tikzstyle{vertical}=[very thin, color=gray] 
    \tikzstyle{degree}=[thin] 
    \tikzstyle{horizontal}=[-, color=gray] 
      \draw[->] (0,0) -- (10.5,0); 
      \draw[->] (0,0) -- (0,6.5); 

      \draw (10.8, 0) node[right] {$n$}; 

\draw[vertical] (1,-0.1) -- (1,6.5);
\draw[vertical] (2,-0.1) -- (2,6.5);
\draw[vertical] (3,-0.1) -- (3,6.5);
\draw[vertical] (4,-0.1) -- (4,6.5);
\draw[vertical] (5,-0.1) -- (5,6.5);
\draw[vertical] (6,-0.1) -- (6,6.5);
\draw[vertical] (7,-0.1) -- (7,6.5);
\draw[vertical] (8,-0.1) -- (8,6.5);
\draw[vertical] (9,-0.1) -- (9,6.5);
\draw[vertical] (10,-0.1) -- (10,6.5);

\draw[horizontal] (-0.1, 1.00000000000000) -- (10.5, 1.00000000000000);
\draw[horizontal] (-0.1, 2.00000000000000) -- (10.5, 2.00000000000000);
\draw[horizontal] (-0.1, 3.00000000000000) -- (10.5, 3.00000000000000);
\draw[horizontal] (-0.1, 4.00000000000000) -- (10.5, 4.00000000000000);
\draw[horizontal] (-0.1, 5.0) -- (10.5, 5.0);
\draw[horizontal] (-0.1, 6.0) -- (10.5, 6.0);

\draw (-0.2, 1.00000000000000) node[left] {$\unit{1}{ms}$};
\draw (-0.2, 2.00000000000000) node[left] {$\unit{2}{ms}$};
\draw (-0.2, 3.00000000000000) node[left] {$\unit{3}{ms}$};
\draw (-0.2, 4.00000000000000) node[left] {$\unit{4}{ms}$};
\draw (-0.2, 5.0) node[left] {$\unit{5}{ms}$};
\draw (-0.2, 6.0) node[left] {$\unit{6}{ms}$};

\draw (1, -0.2) node[below] {$1$};
\draw (2, -0.2) node[below] {$2$};
\draw (3, -0.2) node[below] {$3$};
\draw (4, -0.2) node[below] {$4$};
\draw (5, -0.2) node[below] {$5$};
\draw (6, -0.2) node[below] {$6$};
\draw (7, -0.2) node[below] {$7$};
\draw (8, -0.2) node[below] {$8$};
\draw (9, -0.2) node[below] {$9$};
\draw (10, -0.2) node[below] {$10$};

\draw (9, 0.973692694153) node[bull] {$\times$};
\draw (7, 0.395330355934) node[bull] {$\times$};
\draw (6, 0.446085782014) node[bull] {$\times$};
\draw (8, 0.588429881017) node[bull] {$\times$};
\draw (8, 0.460174805845) node[bull] {$\times$};
\draw (10, 0.811944170322) node[bull] {$\times$};
\draw (8, 0.817413609591) node[bull] {$\times$};
\draw (9, 0.526843139118) node[bull] {$\times$};
\draw (10, 1.31454460642) node[bull] {$\times$};
\draw (10, 1.44484377375) node[bull] {$\times$};
\draw (10, 0.743772299269) node[bull] {$\times$};
\draw (9, 1.00584272214) node[bull] {$\times$};
\draw (8, 0.497269578745) node[bull] {$\times$};
\draw (6, 0.2878647346) node[bull] {$\times$};
\draw (8, 2.02005026224) node[bull] {$\times$};
\draw (8, 0.6664039155) node[bull] {$\times$};
\draw (9, 0.645924330866) node[bull] {$\times$};
\draw (8, 0.368911003024) node[bull] {$\times$};
\draw (8, 0.707492075727) node[bull] {$\times$};
\draw (10, 0.665829046077) node[bull] {$\times$};
\draw (8, 0.502768108658) node[bull] {$\times$};
\draw (6, 0.411419868469) node[bull] {$\times$};
\draw (5, 0.439592770168) node[bull] {$\times$};
\draw (10, 0.680270967681) node[bull] {$\times$};
\draw (9, 0.730795901021) node[bull] {$\times$};
\draw (7, 1.21122362453) node[bull] {$\times$};
\draw (6, 0.426391298457) node[bull] {$\times$};
\draw (8, 0.422124067417) node[bull] {$\times$};
\draw (10, 0.797113034615) node[bull] {$\times$};
\draw (8, 0.784677451528) node[bull] {$\times$};
\draw (9, 0.558899124929) node[bull] {$\times$};
\draw (8, 0.557336432827) node[bull] {$\times$};
\draw (10, 1.65695749732) node[bull] {$\times$};
\draw (9, 0.878587659122) node[bull] {$\times$};
\draw (8, 0.465836098075) node[bull] {$\times$};
\draw (6, 0.781499978268) node[bull] {$\times$};
\draw (1, 0.252962112427) node[bull] {$\times$};
\draw (8, 0.506190461966) node[bull] {$\times$};
\draw (10, 1.14137567884) node[bull] {$\times$};
\draw (8, 0.915317507515) node[bull] {$\times$};
\draw (3, 0.355625152588) node[bull] {$\times$};
\draw (9, 0.665070923422) node[bull] {$\times$};
\draw (8, 0.377104217242) node[bull] {$\times$};
\draw (4, 0.682932989938) node[bull] {$\times$};
\draw (10, 1.39322487012) node[bull] {$\times$};
\draw (10, 0.776899942394) node[bull] {$\times$};
\draw (9, 0.494371139197) node[bull] {$\times$};
\draw (8, 0.499991833767) node[bull] {$\times$};
\draw (10, 5.94537033584) node[bull] {$\times$};
\draw (10, 0.916009973249) node[bull] {$\times$};
\draw (9, 0.611525898991) node[bull] {$\times$};
\draw (7, 0.715921574417) node[bull] {$\times$};
\draw (10, 1.52558454953) node[bull] {$\times$};
\draw (8, 0.423881386191) node[bull] {$\times$};
\draw (10, 1.67110318043) node[bull] {$\times$};
\draw (10, 0.759863611451) node[bull] {$\times$};
\draw (4, 0.456936219159) node[bull] {$\times$};
\draw (10, 1.19515079497) node[bull] {$\times$};
\draw (9, 0.507364993427) node[bull] {$\times$};
\draw (8, 0.549336930656) node[bull] {$\times$};
\draw (6, 0.433429409958) node[bull] {$\times$};
\draw (5, 0.495788898874) node[bull] {$\times$};
\draw (9, 1.38010559365) node[bull] {$\times$};
\draw (10, 0.632922078732) node[bull] {$\times$};
\draw (9, 1.0205599227) node[bull] {$\times$};
\draw (6, 0.443601980805) node[bull] {$\times$};
\draw (8, 0.457965458518) node[bull] {$\times$};
\draw (8, 0.701011969004) node[bull] {$\times$};
\draw (8, 0.971253567733) node[bull] {$\times$};
\draw (3, 0.321197509766) node[bull] {$\times$};
\draw (9, 0.545319915729) node[bull] {$\times$};
\draw (8, 0.476545939244) node[bull] {$\times$};
\draw (10, 0.672186697272) node[bull] {$\times$};
\draw (9, 1.04712628911) node[bull] {$\times$};
\draw (8, 0.517508440618) node[bull] {$\times$};
\draw (6, 0.441821970681) node[bull] {$\times$};
\draw (10, 0.736699374446) node[bull] {$\times$};
\draw (8, 2.15522942843) node[bull] {$\times$};
\draw (10, 0.91556085748) node[bull] {$\times$};
\draw (9, 0.753115084185) node[bull] {$\times$};
\draw (6, 0.770877708088) node[bull] {$\times$};
\draw (9, 0.570098637921) node[bull] {$\times$};
\draw (8, 0.368814800171) node[bull] {$\times$};
\draw (10, 0.981444500615) node[bull] {$\times$};
\draw (8, 0.562978601334) node[bull] {$\times$};
\draw (10, 0.723384490239) node[bull] {$\times$};
\draw (9, 0.446571239316) node[bull] {$\times$};
\draw (8, 0.575776836311) node[bull] {$\times$};
\draw (6, 0.357063345634) node[bull] {$\times$};
\draw (9, 2.85788609235) node[bull] {$\times$};
\draw (10, 1.06568436028) node[bull] {$\times$};
\draw (10, 0.651373859961) node[bull] {$\times$};
\draw (9, 0.711981695052) node[bull] {$\times$};
\draw (7, 1.29794185256) node[bull] {$\times$};
\draw (2, 0.286459922791) node[bull] {$\times$};
\draw (8, 0.435628258565) node[bull] {$\times$};
\draw (10, 0.560431674787) node[bull] {$\times$};
\draw (8, 0.746021655255) node[bull] {$\times$};
\draw (9, 0.471773093596) node[bull] {$\times$};
\draw (8, 0.55239145719) node[bull] {$\times$};
\draw (5, 0.354597965876) node[bull] {$\times$};
\draw (10, 0.564990144157) node[bull] {$\times$};
\draw (9, 0.875984676179) node[bull] {$\times$};
\draw (7, 0.350344373171) node[bull] {$\times$};
\draw (6, 0.697425071229) node[bull] {$\times$};
\draw (8, 0.445409857454) node[bull] {$\times$};
\draw (10, 1.14749215307) node[bull] {$\times$};
\draw (8, 0.916566818568) node[bull] {$\times$};
\draw (9, 0.656615752195) node[bull] {$\times$};
\draw (8, 0.390903919898) node[bull] {$\times$};
\draw (10, 4.24480353063) node[bull] {$\times$};
\draw (4, 0.625491142273) node[bull] {$\times$};
\draw (10, 1.0144863906) node[bull] {$\times$};
\draw (10, 0.734179246463) node[bull] {$\times$};
\draw (9, 0.865427459256) node[bull] {$\times$};
\draw (8, 0.46717645554) node[bull] {$\times$};
\draw (6, 0.327520012536) node[bull] {$\times$};
\draw (8, 0.593241929054) node[bull] {$\times$};
\draw (10, 0.894861471216) node[bull] {$\times$};
\draw (9, 0.643476114525) node[bull] {$\times$};
\draw (8, 0.410661613078) node[bull] {$\times$};
\draw (10, 0.834801359744) node[bull] {$\times$};
\draw (4, 0.459769192864) node[bull] {$\times$};
\draw (10, 0.732008411038) node[bull] {$\times$};
\draw (9, 0.50152801728) node[bull] {$\times$};
\draw (8, 0.571024866835) node[bull] {$\times$};
\draw (6, 0.351200712488) node[bull] {$\times$};
\draw (5, 0.911190396263) node[bull] {$\times$};
\draw (10, 1.16784237092) node[bull] {$\times$};
\draw (8, 0.687918933967) node[bull] {$\times$};
\draw (9, 0.765658805201) node[bull] {$\times$};
\draw (7, 0.310102841149) node[bull] {$\times$};
\draw (6, 0.425691604614) node[bull] {$\times$};
\draw (8, 0.422862143165) node[bull] {$\times$};
\draw (10, 0.818710080225) node[bull] {$\times$};
\draw (8, 0.783524076107) node[bull] {$\times$};
\draw (9, 0.609635719533) node[bull] {$\times$};
\draw (8, 0.557187367106) node[bull] {$\times$};
\draw (10, 1.50480692629) node[bull] {$\times$};
\draw (10, 0.526476141947) node[bull] {$\times$};
\draw (9, 1.02406199074) node[bull] {$\times$};
\draw (8, 0.505700778996) node[bull] {$\times$};
\draw (6, 0.531112106101) node[bull] {$\times$};
\draw (8, 1.33019611573) node[bull] {$\times$};
\draw (10, 0.892244448212) node[bull] {$\times$};
\draw (8, 0.89056679372) node[bull] {$\times$};
\draw (9, 3.00569207714) node[bull] {$\times$};
\draw (9, 0.571640371248) node[bull] {$\times$};
\draw (8, 0.374091111935) node[bull] {$\times$};
\draw (8, 0.587578689596) node[bull] {$\times$};
\draw (10, 0.744573536438) node[bull] {$\times$};
\draw (9, 0.443980614733) node[bull] {$\times$};
\draw (8, 0.502115688683) node[bull] {$\times$};
\draw (6, 0.294301074697) node[bull] {$\times$};
\draw (10, 0.858959022919) node[bull] {$\times$};
\draw (10, 0.904906356739) node[bull] {$\times$};
\draw (9, 0.805216214884) node[bull] {$\times$};
\draw (7, 0.496001758669) node[bull] {$\times$};
\draw (8, 0.420753137232) node[bull] {$\times$};
\draw (10, 1.66672092267) node[bull] {$\times$};
\draw (10, 0.830602034773) node[bull] {$\times$};
\draw (4, 0.502754660214) node[bull] {$\times$};
\draw (9, 0.473738798756) node[bull] {$\times$};
\draw (8, 0.47186983669) node[bull] {$\times$};
\draw (5, 0.452318053315) node[bull] {$\times$};
\draw (10, 0.521986917031) node[bull] {$\times$};

  \end{tikzpicture} 
  \caption{This figure plots, for $n\leq 10$ and for each transitive
    permutation group $G\subset \sg[n]$, the average computation time
    per canonical integer vectors below the staircase. If one ignores
    the two top groups for each $n$ (which correspond to $\mathfrak A_n$ and
    $\sg[n]$ respectively), the worst case complexity is seemingly
    roughly linear in the size of the result: $\OO(nC(G))$. }
\end{figure}
\TODO{improve by using $n!/|G|+Catalan(n)$ as reference?}
\begin{figure}
  \label{benchmarks.cannonics.bound}
  \centering
  \input{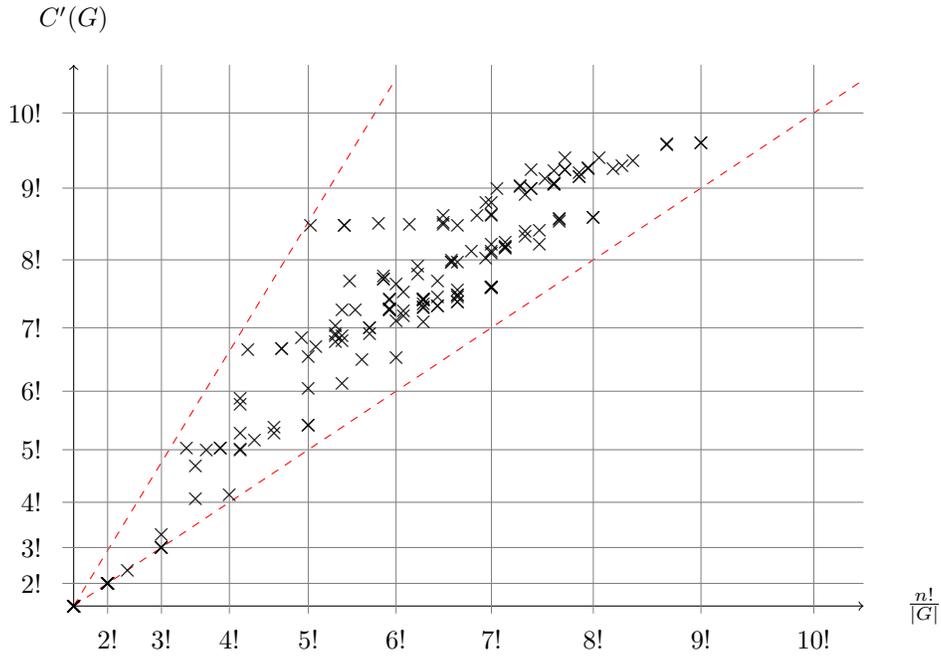}
  \caption{This figure plots, for $n\leq 10$ and for each transitive
    permutation group $G\subset \sg[n]$, the number
    $C'(G):=C(G)-\operatorname{catalan}(n)+1$ of canonical integer
    vectors below the staircase for $G$ which are \emph{not} non zero
    partitions, versus the number $n!/|G|$ of secondary
    invariants. The dotted lines suggest that, in practice, $n!/|G|
    \leq C'(G)\leq (n!/|G|)^{2.5}$.}
\end{figure}

\begin{remark}
  \label{remark.evaluation.monomial}
  Let $\x^\alpha$ be a monomial. Then, evaluating it on a point
  $\rhoroot_\sigma$ requires at most $\OO(n)$ arithmetic operations in
  $\ZZ$. Assume indeed that $\rho^k$ has been precomputed in $\kk$ and
  cached for all $k$ in $0,\dots,n-1$; then, one can use:
  \begin{displaymath}
    \x^\alpha(\rhoroot_\sigma) = \rho^{\langle \alpha\mid \sigma\rangle \mod n }\,,
  \end{displaymath}
  where $\sigma$ is written, in the scalar product, as a permutation
  of $\{0,\dots,n-1\}$.
\end{remark}

\begin{remark}
  \label{remark.evaluation.orbitsum}
  Currently, the evaluation of the orbitsum $\orbitsum{\x^\alpha}$ of
  a monomial on a point $\rhoroot_\sigma$ is carried out by evaluating
  each monomial in the orbit. This gives a complexity of $\OO(n|G|)$
  arithmetic operations in $\ZZ$ (for counting how many times each
  $\rho^k$ appears in the result) and $\OO(n)$ additions in $\kk$ (for
  expressing the result in $\kk$). This can be roughly bounded by
  $\OO(|G|)$ arithmetic operations in $\kk$. This bounds the
  complexity of calculating $\Phi(\orbitsum{\x^\alpha})$ on all
  $\frac{n!}{|G|}$ points by $\frac{n!}{|G|} \OO(|G|)=\OO(n!)$.
\end{remark}

This worst case complexity gives only a very rough overestimate of the
average complexity in our application. Indeed, in practice, most of
the irreducible secondary invariants are of low degree; thus
Algorithm~\ref{algorithm} only need to evaluate orbitsums of monomials
$m$ of low degree; such monomials have many multiplicities in their
exponent vector, and tend to have a large automorphism group, that is
a small orbit.

Furthermore, it is to be expected that such evaluations can be carried
out much more efficiently by exploiting the inherent redundancy (\emph{à la}
Fast Fourrier Transform). In particular, one can use the strong
generating set of $G$ to apply a divide and conquer approach to the
evaluation of an orbitsum on a point. The complexity analysis and
benchmarking remains to be done to evaluate the practical
gain. Finally, the evaluation of an orbitsum on many points is
embarrassingly parallel (though fine grained), a property which we
have not exploited yet.

\section{Complexity analysis}
\label{section.complexity}

For the sake of simplicity, all complexity results are expressed in
terms of arithmetic operations in the ground field
$\kk=\QQ(\rho)$. This model is realistic, because, in practice, the
growth of coefficients does not seem to become a bottleneck; a
possible explanation for this phenomenon might be that the natural
coefficient growth would be compensated by the pointwise product which
tends to preserve and increase sparseness.  We also consider that one
operation in $\kk$ is equivalent to $n$ operations in $\QQ$. This is a
slight abuse; however $\dim_\QQ \QQ(\rho) = \phi(n)\geq 0.2n$ for
$n\leq 10000$ which is far beyond any practical value of $n$ in our
context.

\begin{theorem}
  \label{proposition.complexity}
  Let $G$ be a permutation group, and take the elementary symmetric
  functions as primary invariants. Then, the complexity of computing
  secondary invariants by evaluation using Algorithm~\ref{algorithm}
  is bounded above by $\OO(n!^2 + n!^3/|G|^2)$ arithmetic operations
  in $\kk$.
\end{theorem}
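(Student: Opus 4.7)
The plan is to bound the total running time by splitting the algorithm's work, degree by degree, into two phases: the product phase, where lower-degree invariants are combined, and the orbitsum phase, where orbitsums of canonical monomials are tested. Set $t := n!/|G|$, which equals $\dim_{\kk} \EG$ by Remark~\ref{remark.isoquotientinvariant}. Identify $\EG$ with $\kk^t$ via $\Phi$, and maintain each $E_d$ as a row-echelon basis of vectors in $\kk^t$, so that each membership test and each extension step against $E_d$ runs in $\OO(t^2)$ arithmetic operations in $\kk$.

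For the orbitsum phase, by Remark~\ref{remark.canonical} the iteration \texttt{CanonicalMonomialsUnderStaircaseOfDegree} traverses at most $\OO(n!)$ monomials in total across all degrees~$d$. For each monomial $m$, computing $\Phi(\orbitsum{m})$ on the $t$ evaluation points costs $\OO(n!)$ arithmetic operations in $\kk$ by Remark~\ref{remark.evaluation.orbitsum}, and the subsequent membership test in $E_d$ costs $\OO(t^2)$. Summing over all monomials yields
\[
\OO(n!)\cdot\bigl(\OO(n!)+\OO(t^2)\bigr) \;=\; \OO(n!^2+n!\,t^2) \;=\; \OO\bigl(n!^2+n!^3/|G|^2\bigr),
\]
which already accounts for both terms in the announced bound.

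For the product phase, enumerate the pairs $(\eta,\eta')\in\SecInv[k]\times\SecInvIrr[l]$ with $k+l=d$ under a canonical ordering on the irreducible secondary invariants, so that each product of irreducibles is constructed exactly once; the total number of pairs considered across all $d$ is then at most $|\SecInv|=t$. For each such pair, $\Phi(\eta\eta')$ is obtained as the pointwise product $\Phi(\eta)\cdot\Phi(\eta')$ in $\kk^t$ at cost $\OO(t)$, followed by a membership test at cost $\OO(t^2)$; this phase therefore contributes $\OO(t\cdot t^2)=\OO(t^3)=\OO(n!^3/|G|^3)$, which is absorbed into the orbitsum bound. The reassignment $E_d := E_{d-n}$ at the start of each iteration, justified by Theorem~\ref{theorem.secondary.evaluation}, requires no arithmetic operations.

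The main subtlety lies in the bookkeeping: the $\OO(t^2)$ bound per linear-algebra operation on $E_d$ is achieved only because $\EG$ is represented concretely as $\kk^t$ via $\Phi$, rather than through polynomial arithmetic in $\R$, so the length of each vector and the ambient dimension of the linear-algebra problem are both $t$ and not $n!$. This is precisely where the evaluation approach pays off relative to working modulo $\langle e_1,\dots,e_n\rangle$ in $\R$. Once this representation is granted, adding the two phase bounds yields the claimed $\OO(n!^2+n!^3/|G|^2)$.
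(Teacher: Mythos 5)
Your decomposition is essentially the paper's: orbitsum evaluation, membership tests in $\EG\cong\kk^t$ maintained in echelon form, and products of lower-degree invariants. Your orbitsum phase is handled correctly and exactly as in the paper: $\OO(n!)$ candidates (Remark~\ref{remark.canonical}), $\OO(n!)$ per evaluation (Remark~\ref{remark.evaluation.orbitsum}), and $\OO(t^2)$ per membership test yield the two terms $\OO(n!^2)$ and $\OO(n!\,t^2)=\OO(n!^3/|G|^2)$, corresponding to the paper's steps (4) and (6).

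The gap is in the product phase. The claim that ``the total number of pairs considered across all $d$ is at most $|\SecInv|=t$'' is false. The loop ranges over all pairs $(\eta,\eta')\in \SecInv[k]\times\SecInvIrr[l]$ with $k+l=d$, and a pair whose product \emph{fails} the membership test still costs arithmetic; no canonical ordering of the irreducibles can shrink the number of candidates to the number of accepted products, because one cannot know in advance which products will be rejected. The correct worst-case count --- and the one the paper uses --- is $|\SecInv|\cdot|\SecInvIrr|\le t^2$ pairs. The paper charges each pair only the $\OO(t)$ pointwise product, getting $\OO(t^3)=\OO(n!^3/|G|^3)$, which is absorbed. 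If you instead charge each of the $t^2$ pairs your $\OO(t^2)$ membership test, the product phase costs $\OO(t^4)=\OO(n!^4/|G|^4)$, which is \emph{not} absorbed into $\OO(n!^2+n!^3/|G|^2)$ once $|G|^2<n!$ (for the trivial group this is $n!^4$ against a claimed $n!^3$). So as written your argument does not give the stated bound for small groups; to repair it you must either argue that the number of product candidates actually tested is $\OO(n!)$ so that they can be folded into the same row reduction as the orbitsums, or adopt the paper's (admittedly crude) accounting, which separates the $\OO(t)$ cost of forming each of the $t^2$ products from a single row reduction of $\OO(n!)$ vectors.
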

\begin{proof}
  To get this upper bound on the complexity, we broadly simplify the
  main steps of this algorithm to:
  \begin{enumerate}
  \item Group theoretic computations on $G$: strong generating set,
    conjugacy classes, etc;
    \label{proposition.complexity.group}
  \item Computation of the Hilbert series of $\RG$;
    \label{proposition.complexity.hilbert}
  \item Construction of canonical monomials under the staircase;
    \label{proposition.complexity.canonical}
  \item Computation by $\Phi$ of the evaluation vectors of the
    orbitsums of those monomials;
    \label{proposition.complexity.evaluation}
  \item Computation of products $\Phi(\eta)\Phi(\eta')$ of evaluation
    vectors of secondary invariants;
    \label{proposition.complexity.products}
  \item Row reduction of the evaluation vectors.
    \label{proposition.complexity.reduction}
  \end{enumerate}

  The complexity of \eqref{proposition.complexity.group} is a small
  polynomial in $n$ (see
  e.g.~\cite{Seress.2003.PermiutationGroupAlgorithms}) and is
  negligible in practice as well as in
  theory. \eqref{proposition.complexity.hilbert} can be reduced to the
  addition of $c$ polynomials of degree at most $\binom n 2$, where
  $c\leq |G|\leq n!$ is the number of conjugacy classes of $G$ (the
  denominator of the Hilbert series is known; the mentioned
  polynomials contribute to its numerator, that is the generating
  series of the secondary invariants); it is negligible as
  well. Furthermore, by Remark~\ref{remark.canonical}
  \eqref{proposition.complexity.canonical} is not a bottleneck.

  Using Lemma~\ref{lemma.spanning} and
  Remark~\ref{remark.evaluation.orbitsum}, the complexity of
  \eqref{proposition.complexity.evaluation} is bounded above by
  $\OO(n!^2)$ (at most $\OO(n!)$ orbitsums to evaluate, for a cost of
  $\OO(n!)$ each).

  For a very crude upper bound for
  \eqref{proposition.complexity.products}, we assume that the
  algorithm computes all products of evaluation vectors of two
  secondary invariants. This gives $(n!/G)^2$ products in $\EG$ which
  is in $\OO(n!/G)^3$.

  Finally, in \eqref{proposition.complexity.reduction}, the cost of
  the row reduction of $\OO(n!)$ evaluation vectors in $\EG$ is of
  $\OO(n!^3/|G|^2)$.
  %
  %
\end{proof}

This complexity bound gives some indication that the symmetries are
honestly taken care of by this algorithm. Consider indeed any
algorithm computing secondary invariants by linear algebra in
$\KK[x]/\Sym^+$ (say using Gröbner basis or orthogonal bases for the
Schur-Schubert scalar product). Then the same estimation gives a
complexity of $\OO(n!^2/|G|)$ (reducing $n!$ candidates to get
$n!/|G|$ linearly independent vectors in a vector space of dimension
$n!$).  Therefore, for $G$ large enough, a gain of $|G|$ is obtained.

That being said, this is a \emph{very} crude upper bound. For a fixed
group $G$, one could use the Hilbert series to calculate explicitly a
much better estimate: indeed the grading splits the linear algebra in
many smaller problems and also greatly reduces the number of products
to consider. However, it seems hard in general to get enough control
on the Hilbert series, to derive complexity information solely in term
of basic information on the group ($n$, $|G|$, ...).
Also, in practice, there usually are only few irreducible invariants,
and they are of small degrees. Thus only few of the canonical monomial
need actually to be generated and evaluated.

It is therefore essential to complement this complexity analysis with
extensive benchmarks to confirm the practical gains. This is the topic
of the next section.

\section{Implementation and benchmarks}
\label{section.benchmarks}

Algorithm~\ref{algorithm}, and many variants, have been implemented in
the open source mathematical platform \Sage~\cite{Sage}. The choice of
the platform was motivated by the availability of most of the basic
tools (group theory via \gap~\cite{GAP4}, cyclotomic fields, linear
algebra, symmetric functions, etc), and the existence of a community
to share with the open-source development of the remaining tools
(e.g. Schubert polynomials or the orderly generation of canonical
monomials)~\cite{Sage-Combinat}. Thanks to the \cython compiler, it
was also easy to write most of the code in a high level interpreted
language (\python), and cherry pick just those critical sections that
needed to be compiled (orderly generation, evaluation). The
implementation is publicly available in alpha version via the
\sagecombinat patch server. It will eventually be integrated into the
\Sage library. \TODO{\#lines of code}

We ran systematic benchmarks (see Figure~\ref{benchmark.sage-singular}
and~\ref{benchmark.relative}), comparing the results with the
implementation of secondary invariants in
\singular~\cite{Singular,King.2007.secondary}. Note that \singular's
implementation deals with any finite group of matrices. Also, it
precomputes and uses its own primary invariants instead of the
elementary symmetric functions. Therefore, the comparison is not
immediate: on the one hand, \singular has more work to do (finding the
primary invariants); on the other hand, when the primary invariants
are of small degree, the size of the result can be much smaller. Thus,
those benchmarks should eventually be complemented by:
\begin{itemize}
\item Calculations of secondary invariants w.r.t. the elementary
  symmetric functions, using Gröbner basis using \singular and
  \magma;\TODO{ or \Sage}
\item Calculations of secondary invariants using \singular and
  \magma;\TODO{ or \Sage}
\item Calculations of secondary invariants w.r.t. the elementary
  symmetric functions, using SAGBI-Gröbner basis (for example by using
  \mupadcombinat~\cite{Thiery.PerMuVAR,MuPAD-Combinat}).
\end{itemize}
A similar benchmark comparing \magma~\cite{Cannon_al.1996} and \mupadcombinat
is presented in~\cite[Figure~1]{Thiery.CMGS.2001} (up to a bias: the
focus in \mupadcombinat is on a minimal generating set, but this is
somewhat equivalent to irreducible secondary invariants). This
benchmark can be roughly compared with that of
Figure~\ref{benchmark.sage-singular} by shifting by a speed factor of
$10$ to compensate for the hardware improvements since 2001. Related
benchmarks are available
in~\cite{King.2007.secondary,King.2007.minimal}.

We used the transitive permutation groups as test bed. A practical
motivation is that there are not so many of them and they are easily
available through the \gap
database~\cite{Hulpke.2005.TransitivePermutationGroups}. At the same
time, we claim that they provide a wide enough variety of permutation
groups to be representative. In particular, the computation for non
transitive permutation groups tend to be easier, since one can use
primary invariants of much smaller degrees, namely the elementary
symmetric functions in each orbit of variables.

The benchmarks were run on the computation server
\texttt{sage.math.washington.edu}\footnote{This server is part of the
  \Sage cluster at the University of Washington at Seattle and is
  devoted to \Sage development; it was financed by "National Science
  Foundation Grant No. DMS-0821725".} which is equipped with 24
Intel(R) Xeon(R) CPU X7460 @2.66GHz cores and $128$~GB of RAM. We did
not use parallelism, except for running up to four tests in parallel.
The memory usage is fairly predictable, at least for the \Sage
implementation, so we did not include it into the benchmarks. In
practice, the worst calculation used $12$~GB. Any calculation running
over 24 hours was aborted.

\begin{figure}
  \label{benchmark.sage-singular}
  \begin{bigcenter}
    \scalebox{1}{\input{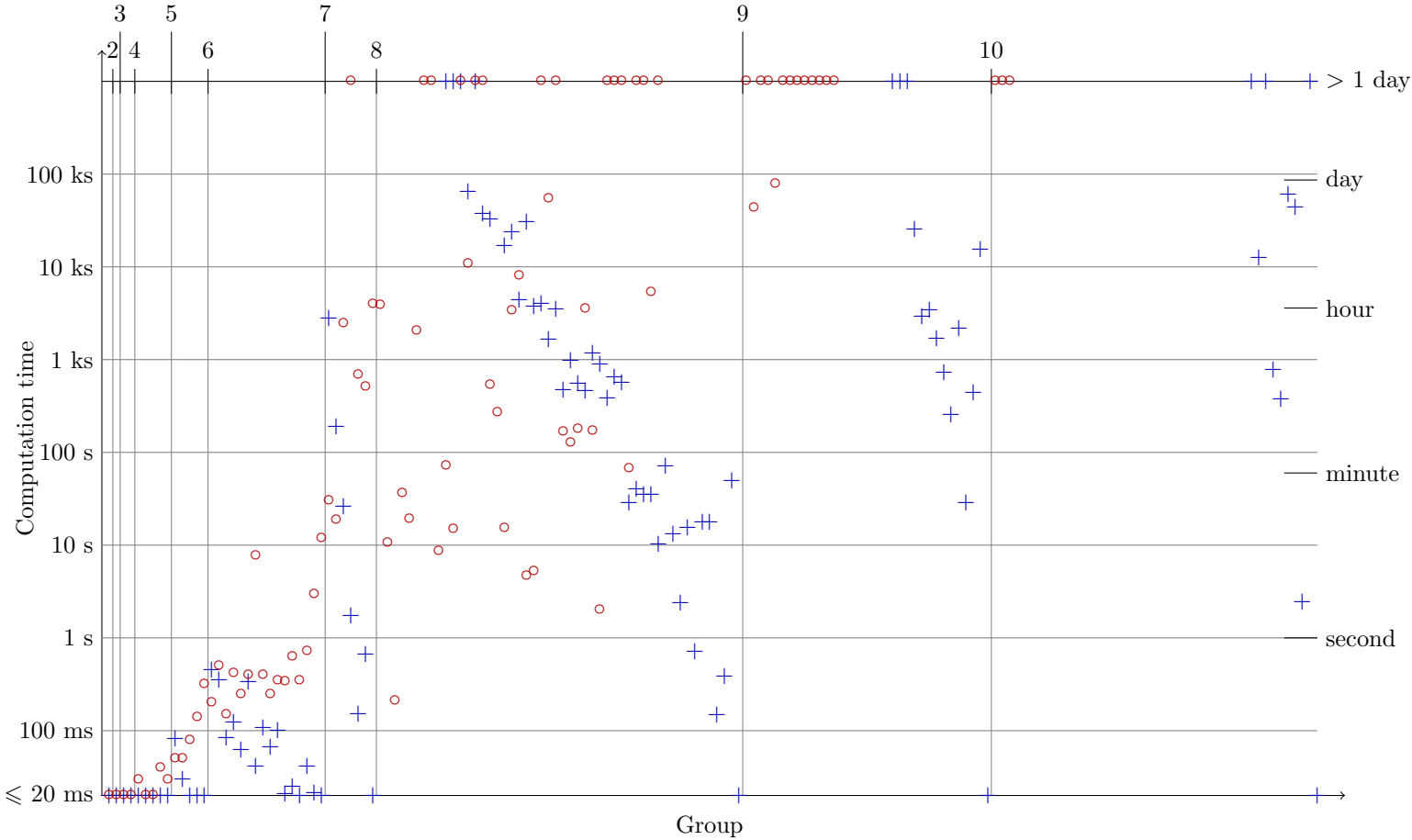}}
  \end{bigcenter}
  \caption[Comparative benchmark between \Sage and
  \singular]{Comparative benchmark for the computation of secondary
    invariants for all transitive permutation groups for $n\leq 10$
    using \Sage's evaluation implementation ($\color{darkblue}{+}$)
    and \singular's elimination implementation
    ($\color{darkred}{\circ}$). The groups are sorted horizontally by
    increasing $n$ and then by increasing cardinality.}
\end{figure}

\TODO{Add relevant data or each group? n, group id, |G|, number of
  secondary invariants, time.}

\begin{figure}
  \label{benchmark.relative}
  \begin{bigcenter}
    \scalebox{0.8}{  \begin{tikzpicture}[xscale=0.8, yscale=0.9]
    \tikzstyle{sage}=[darkblue, minimum size=4pt, inner sep=0pt] 
    \tikzstyle{horizontal}=[-, color=gray] 
    \tikzstyle{vertical}=[very thin, color=gray] 
      \draw[->] (0,0) -- (22.5,0); 
      \draw[->] (0,0) -- (0,13.5); 

      \draw (22.6, 0) node[right] {$\frac{n!}{|G|}$}; 
      \draw[horizontal] (0,1.29885434651241) -- (22,1.29885434651241); 
      \draw[horizontal] (0,2.59770869302482) -- (22,2.59770869302482); 
      \draw[horizontal] (0,3.89656303953723) -- (22,3.89656303953723); 
      \draw[horizontal] (0,5.19541738604964) -- (22,5.19541738604964); 
      \draw[horizontal] (0,6.49432474702518) -- (22,6.49432474702518); 
      \draw[horizontal] (0,7.79312607907446) -- (22,7.79312607907446); 
      \draw[horizontal] (0,9.09208645451312) -- (22,9.09208645451312); 
      \draw[horizontal] (0,10.3908347720993) -- (22,10.3908347720993); 
      \draw[horizontal] (0,11.6896891186117) -- (22,11.6896891186117); 

      \draw (-1.5, 5.0) node[rotate=90, above] {Computation time}; 

      \draw (0,0) node[left] {$\leqslant  100 \mu s$}; 
      \draw (0,1.29885434651241) node[left] {1 ms}; 
      \draw (0,2.59770869302482) node[left] {10 ms}; 
      \draw (0,3.89656303953723) node[left] {100 ms}; 
      \draw (0,5.19541738604964) node[left] {1 s}; 
      \draw (0,6.49432474702518) node[left] {10 s}; 
      \draw (0,7.79312607907446) node[left] {100 s}; 
      \draw (0,9.09208645451312) node[left] {1 ks}; 
      \draw (0,10.3908347720993) node[left] {10 ks}; 
      \draw (0,11.6896891186117) node[left] {100 ks}; 

      \draw (0,12.9885692415184) -- (22,12.9885692415184) node[right] {> 1 day}; 

      \draw (21.4,11.6072509246740) -- (22,11.6072509246740) node[right] {day}; 
      \draw (21.4,9.81455162479343) -- (22,9.81455162479343) node[right] {hour}; 
      \draw (21.4,7.50498450542154) -- (22,7.50498450542154) node[right] {minute}; 
      \draw (21.4,5.19541738604964) -- (22,5.19541738604964) node[right] {second}; 
      \draw (21.4,1.29885434651241) -- (22,1.29885434651241) node[right] {millisecond}; 
      \draw (0,0) node[below] {$1!$}; 
      \draw (1.19116672661692,0) node[below] {$2!$}; 
      \draw[vertical] (1.19116672661692,0) -- (1.19116672661692,12.9885692415184); 
      \draw (3.07912132041150,0) node[below] {$3!$}; 
      \draw[vertical] (3.07912132041150,0) -- (3.07912132041150,12.9885692415184); 
      \draw (5.46145477364533,0) node[below] {$4!$}; 
      \draw[vertical] (5.46145477364533,0) -- (5.46145477364533,12.9885692415184); 
      \draw (8.22725826187216,0) node[below] {$5!$}; 
      \draw[vertical] (8.22725826187216,0) -- (8.22725826187216,12.9885692415184); 
      \draw (11.3063795822837,0) node[below] {$6!$}; 
      \draw[vertical] (11.3063795822837,0) -- (11.3063795822837,12.9885692415184); 
      \draw (14.6504073552429,0) node[below] {$7!$}; 
      \draw[vertical] (14.6504073552429,0) -- (14.6504073552429,12.9885692415184); 
      \draw (18.2239075350937,0) node[below] {$8!$}; 
      \draw[vertical] (18.2239075350937,0) -- (18.2239075350937,12.9885692415184); 
      \draw (21.9998167226828,0) node[below] {$9!$}; 
      \draw[vertical] (21.9998167226828,0) -- (21.9998167226828,12.9885692415184);

      \draw (3.95697021484375, 0.2) -- (3.95697021484375, -0.4) node[below] {$10^1$}; 
      \draw (7.91394042968750, 0.2) -- (7.91394042968750, -0.4) node[below] {$10^2$}; 
      \draw (11.8709106445312, 0.2) -- (11.8709106445312, -0.4) node[below] {$10^3$}; 
      \draw (15.8278808593750, 0.2) -- (15.8278808593750, -0.4) node[below] {$10^4$}; 
      \draw (19.7848510742187, 0.2) -- (19.7848510742187, -0.4) node[below] {$10^5$};

      \draw (0.0000, 0.9407)  node[sage] {1}; 
      \draw (0.0000, 0.9331)  node[sage] {2}; 
      \draw (1.191, 2.291)  node[sage] {3}; 
      \draw (0.0000, 0.9658)  node[sage] {3}; 
      \draw (3.079, 2.617)  node[sage] {4}; 
      \draw (3.079, 2.349)  node[sage] {4}; 
      \draw (1.888, 1.987)  node[sage] {4}; 
      \draw (1.191, 2.055)  node[sage] {4}; 
      \draw (0.0000, 0.9078)  node[sage] {4}; 
      \draw (5.461, 3.789)  node[sage] {5}; 
      \draw (4.270, 3.216)  node[sage] {5}; 
      \draw (3.079, 2.871)  node[sage] {5}; 
      \draw (1.191, 2.426)  node[sage] {5}; 
      \draw (0.0000, 0.9535)  node[sage] {5}; 
      \draw (8.227, 4.750)  node[sage] {6}; 
      \draw (8.227, 4.605)  node[sage] {6}; 
      \draw (7.036, 3.804)  node[sage] {6}; 
      \draw (7.036, 4.016)  node[sage] {6}; 
      \draw (6.339, 3.638)  node[sage] {6}; 
      \draw (5.845, 4.587)  node[sage] {6}; 
      \draw (5.845, 3.396)  node[sage] {6}; 
      \draw (5.845, 3.943)  node[sage] {6}; 
      \draw (5.148, 3.671)  node[sage] {6}; 
      \draw (5.148, 3.905)  node[sage] {6}; 
      \draw (4.654, 3.016)  node[sage] {6}; 
      \draw (4.270, 3.119)  node[sage] {6}; 
      \draw (3.957, 2.753)  node[sage] {6}; 
      \draw (3.079, 3.397)  node[sage] {6}; 
      \draw (1.191, 3.029)  node[sage] {6}; 
      \draw (0.0000, 1.084)  node[sage] {6}; 
      \draw (11.31, 9.679)  node[sage] {7}; 
      \draw (10.12, 8.151)  node[sage] {7}; 
      \draw (9.418, 7.041)  node[sage] {7}; 
      \draw (8.227, 5.514)  node[sage] {7}; 
      \draw (5.845, 4.130)  node[sage] {7}; 
      \draw (1.191, 4.966)  node[sage] {7}; 
      \draw (0.0000, 1.120)  node[sage] {7}; 
      \draw (13.46, 12.99)  node[sage] {8}; 
      \draw (13.46, 12.99)  node[sage] {8}; 
      \draw (12.76, 12.99)  node[sage] {8}; 
      \draw (12.76, 11.45)  node[sage] {8}; 
      \draw (12.76, 12.99)  node[sage] {8}; 
      \draw (12.27, 11.14)  node[sage] {8}; 
      \draw (12.27, 11.06)  node[sage] {8}; 
      \draw (12.27, 10.68)  node[sage] {8}; 
      \draw (12.27, 10.88)  node[sage] {8}; 
      \draw (12.27, 9.924)  node[sage] {8}; 
      \draw (12.27, 11.02)  node[sage] {8}; 
      \draw (12.27, 9.839)  node[sage] {8}; 
      \draw (11.57, 9.875)  node[sage] {8}; 
      \draw (11.57, 9.384)  node[sage] {8}; 
      \draw (11.31, 9.796)  node[sage] {8}; 
      \draw (11.08, 8.676)  node[sage] {8}; 
      \draw (11.08, 9.088)  node[sage] {8}; 
      \draw (11.08, 8.761)  node[sage] {8}; 
      \draw (11.08, 8.664)  node[sage] {8}; 
      \draw (11.08, 9.190)  node[sage] {8}; 
      \draw (11.08, 9.034)  node[sage] {8}; 
      \draw (10.38, 8.560)  node[sage] {8}; 
      \draw (10.38, 8.855)  node[sage] {8}; 
      \draw (10.38, 8.779)  node[sage] {8}; 
      \draw (9.886, 7.084)  node[sage] {8}; 
      \draw (9.418, 7.287)  node[sage] {8}; 
      \draw (9.418, 7.202)  node[sage] {8}; 
      \draw (9.189, 7.201)  node[sage] {8}; 
      \draw (9.189, 6.506)  node[sage] {8}; 
      \draw (9.189, 7.599)  node[sage] {8}; 
      \draw (9.189, 6.654)  node[sage] {8}; 
      \draw (8.492, 5.694)  node[sage] {8}; 
      \draw (8.227, 6.746)  node[sage] {8}; 
      \draw (7.998, 5.001)  node[sage] {8}; 
      \draw (7.301, 6.815)  node[sage] {8}; 
      \draw (7.301, 6.820)  node[sage] {8}; 
      \draw (6.110, 4.122)  node[sage] {8}; 
      \draw (5.845, 4.654)  node[sage] {8}; 
      \draw (1.191, 7.401)  node[sage] {8}; 
      \draw (0.0000, 0.9179)  node[sage] {8}; 
      \draw (13.26, 12.99)  node[sage] {9}; 
      \draw (13.26, 12.99)  node[sage] {9}; 
      \draw (12.76, 12.99)  node[sage] {9}; 
      \draw (12.07, 10.92)  node[sage] {9}; 
      \draw (12.07, 9.701)  node[sage] {9}; 
      \draw (11.57, 9.783)  node[sage] {9}; 
      \draw (11.31, 9.391)  node[sage] {9}; 
      \draw (10.87, 8.916)  node[sage] {9}; 
      \draw (10.87, 8.323)  node[sage] {9}; 
      \draw (10.87, 9.531)  node[sage] {9}; 
      \draw (9.683, 7.093)  node[sage] {9}; 
      \draw (9.418, 8.630)  node[sage] {9}; 
      \draw (1.191, 10.64)  node[sage] {9}; 
      \draw (0.0000, 1.084)  node[sage] {9}; 
      \draw (12.96, 12.99)  node[sage] {10}; 
      \draw (12.96, 10.53)  node[sage] {10}; 
      \draw (12.96, 12.99)  node[sage] {10}; 
      \draw (11.77, 8.949)  node[sage] {10}; 
      \draw (10.69, 8.538)  node[sage] {10}; 
      \draw (9.502, 11.41)  node[sage] {10}; 
      \draw (9.502, 11.23)  node[sage] {10}; 
      \draw (8.311, 5.707)  node[sage] {10}; 
      \draw (1.191, 12.99)  node[sage] {10}; 
      \draw (0.0000, 1.199)  node[sage] {10}; 
      \draw (12.93, 12.99)  node[sage] {12}; 
      \draw (11.74, 12.99)  node[sage] {12}; 
      \draw (11.74, 12.99)  node[sage] {12}; 
      \draw (10.54, 7.598)  node[sage] {12}; 
      \draw (0.0000, 1.392)  node[sage] {12}; 
      \draw (13.99, 12.99)  node[sage] {14}; 
      \draw (12.80, 10.82)  node[sage] {14}; 
  \end{tikzpicture}}
  \end{bigcenter}
  \caption[Benchmark versus the number of secondary
  invariant]{Benchmark for the computation of secondary invariants for
    all transitive permutation groups for $n\leq 10$ (and for some
    below $n\leq 14$), using \Sage's evaluation implementation. For
    each such group, $n$ is written at position $(k,t)$, where
    $k=n!/|G|$ is the number of secondary invariants and $t$ is the
    computation time. In particular, the symmetric groups $\sg[n]$ and
    the alternating groups $\mathfrak A_n$ are respectively above $1!$
    and $2!$.}
\end{figure}
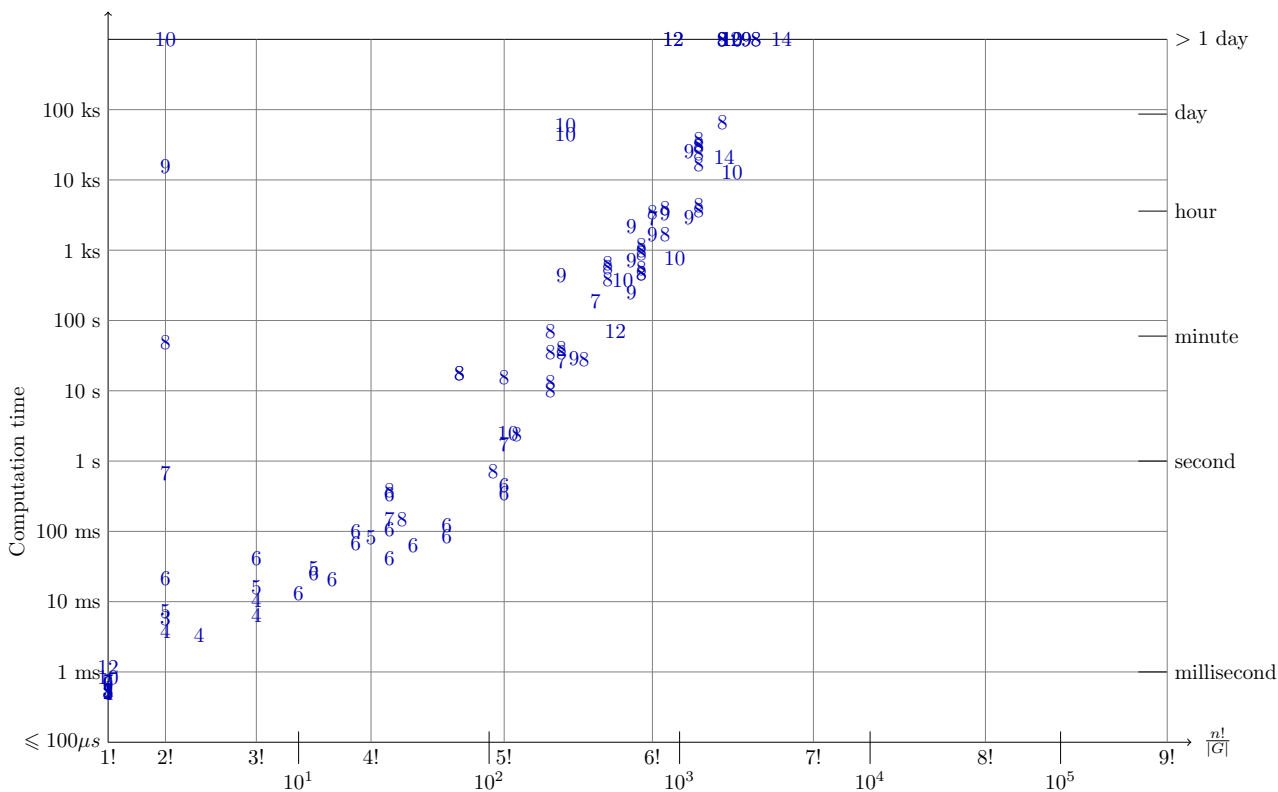

\TODO{Add a line with the current theoretical complexity bound to
  highlight that it is vastly overestimated?}

\section{Further developments}
\label{section.future}

At this stage, the above sections validate the potential of the
evaluation approach. Yet much remains to be done, both in theory and
practice, to design algorithms making an optimal use of this approach.
The main bottleneck so far is the calculation of evaluations by
$\Phi$, and we conclude with a couple problems we are currently
investigating in this direction.

\begin{problem}
  Construct invariants with nice properties under evaluation by $\Phi$
  (sparsity, ...). A promising starting point are Schubert
  polynomials~\cite{LascouxSchutzenberger82,Lascoux.2003.CBMS}, as
  they form a basis of $\R$ as $\Sym$-module whose image under $\Phi$
  is triangular. However, it is not clear whether this triangularity
  can be made somehow compatible with the coset distribution of $G$ in
  $\sg[n]$.

  Another approach would be to search for invariants admitting short
  Straight Line Programs.
\end{problem}

Note that a good solution to this problem, combined with the
evaluation approach of this paper, could possibly open the door for
the solution of a long standing problem, namely the \emph{explicit}
construction of secondary invariants; currently such a description is
known only in the very simple case of products of symmetric
groups~\cite{Garsia_Stanton.1984}. Even just associating in some
canonical way a secondary invariant to each coset in $\sg[n]/G$ seems
elusive.

From a practical point of view, the following would be needed.
\begin{problem}
  Find a good algorithm to compute $\Phi$ on the above
  invariants. This is similar in spirit to finding an analogue of the
  Fast Fourier Transform w.r.t. the Fourier Transform.
\end{problem}


Theorem~\ref{theorem.secondary.evaluation} further suggests that,
using the grading, it could be sufficient to consider only a subset of
the evaluation
points.
This is corroborated by computer exploration; for example, for the
cyclic group $C_7$ of order $7$, $110$ evaluation points out of $720$
were enough for constructing the secondary invariants. Possible
approaches include lazy evaluation strategies, or explicit choices of
evaluation points, or some combination of both.
\begin{problem}
  \label{problem.points}
  Get some theoretical control on which evaluation points are needed
  so that $\Phi$ restricted on those points remains injective on some
  (resp. all) homogeneous component $\RG_d$.
\end{problem}
Here again, Schubert polynomials are natural candidates, with the same
difficulty as above.
A step toward Problem~\ref{problem.points}  would be to solve the following.
\begin{problem}
  For $G\subset\sg[n]$ a permutation group, and to start with for $G$
  the trivial permutation group, find a good description of the
  subspaces $\Phi(\RG_d)$.
\end{problem}


Last but not least, one would want to generalize the evaluation
approach to any matrix groups, following the line sketched in the
introduction. The issue is whether one can get enough control on
perturbations of the primary invariants so that:
\begin{itemize}
\item The orbits of the simple roots are large, in order to benefit
  from the gain of taking a single evaluation point per orbit;
\item Only few of the primary invariants need to be perturbated, to
  best exploit the grading in the analogue of
  Theorem~\ref{theorem.secondary.evaluation}.
\end{itemize}

\section{Acknowledgments}

We would like to thank Marc Giusti, Alain Lascoux, Romain Lebreton,
and Éric Schost, for fruitful discussions, as well as the anonymous
referees of the extended abstract presented at MEGA
2011~\cite{Borie_Thiery.2011.Invariants} for their many useful
suggestions for improvements.

This research was driven by computer exploration using the open-source
mathematical software \Sage~\cite{Sage}. In particular, we perused its
algebraic combinatorics features developed by the \sagecombinat
community~\cite{Sage-Combinat}, as well as its group theoretical and
invariant theoretical features provided respectively by
\gap~\cite{GAP} and \singular~\cite{Singular}. The extensive
benchmarks were run on the computational server
\texttt{sage.math.washington.edu}, courtesy of the \Sage developers
group at the University of Washington (Seattle, USA) and the "National
Science Foundation Grant No. DMS-0821725".

\bibliographystyle{alpha}

\bibliography{main}

\end{document}